\documentclass{amsart}

\usepackage{ifthen}
\usepackage{graphicx}
\usepackage{amssymb}
\usepackage{enumerate}
\newtheorem{anyprop}{Anyprop}[section]

\newtheorem{theorem}[anyprop]{Theorem}
\newtheorem{lemma}[anyprop]{Lemma}
\newtheorem{proposition}[anyprop]{Proposition}
\newtheorem{corollary}[anyprop]{Corollary}

\theoremstyle{definition}

\newtheorem{conjecture}[anyprop]{Conjecture}

\newtheorem{example}[anyprop]{Example}

\newtheorem{remark}[anyprop]{Remark}

\newtheorem{definitionintro}{Definition}

\newcommand  {\fop}     {\mathfrak{p}}


\newcommand  {\codim}   {\operatorname{codim}}

\renewcommand  {\ker }  {\operatorname{ker}}

\newcommand  {\rad}      {\operatorname{rad}}

\newcommand  {\Spec}    {\operatorname{Spec}}

\theoremstyle{remark}

\numberwithin{equation}{section}

\usepackage{amscd}
\usepackage{amssymb}
\input xy
\xyoption{all}

\setlength{\textwidth}{28pc} \setlength{\textheight}{43pc}

\newcommand{\idealq}{{\mathfrak q}}

\newcommand{\idealp}{{\mathfrak p}}

\begin{document}
\title[NORMALITY AND RELATED PROPERTIES OF FORCING ALGEBRAS]
{NORMALITY AND RELATED PROPERTIES OF FORCING ALGEBRAS}

\author[Danny de Jes\'us G\'omez-Ram\'irez]{Danny de Jes\'us G\'omez-Ram\'irez}
\author[Holger Brenner]{Holger Brenner}

\address{Vienna University of Technology, Institute of Discrete Mathematics and Geometry,
wiedner Hauptstaße 8-10, 1040, Vienna, Austria.}
\address{Universit\"at Osnabr\"uck, Fachbereich 6: Mathematik/Informatik,
Albrechtstr. 28a,
49076 Osnabr\"uck, Germany}
\email{daj.gomezramirez@gmail.com}
\email{hbrenner@uni-osnabrueck.de}


\subjclass{}

\begin{abstract}
We present a sufficient condition for irreducibility of forcing algebras and study the (non)-reducedness phenomenon. Furthermore, we prove a criterion for normality for forcing algebras over a polynomial base ring with coefficients in a perfect field. This gives a geometrical normality criterion for algebraic (forcing) varieties over algebraically closed fields. Besides, we examine in detail an specific (enlightening) example with several forcing equations. Finally, we compute explicitly the normalization of a particular forcing algebra by means of finding explicitly the generators of the ideal defining it as an affine ring.
\end{abstract}
\maketitle
\noindent Mathematical Subject Classification (2010): 13B22, 14A15, 14R25, 54D05 

\smallskip

\noindent Keywords: Forcing algebra, normality, normalization, reduceness, irreducibility\footnote{This paper should be cited as follows D. A. J. G\'omez-Ram\'irez and H. Brenner. Normality and Related Properties of Forcing Algebras. Communications in Algebra. Volume 44, Issue 11, pp. 4769-4793, 2016.}

\section*{Introduction}
Let $R$ be a commutative ring, $I=(f_1,\ldots ,f_n)$ a finitely generated ideal and $f$ an arbitrary element of
$R$. A very natural and important question, not only from the theoretical but also from the computational point
of view, is to determine if $f$ belongs to the ideal $I$ or to some ideal closure of it
(for example to the radical, the integral closure, the plus closure, the solid closure, the tight closure, among others).
To answer this question the concept of a forcing algebra introduced by Mel Hochster in the context of
solid closure \cite{hochstersolid} is important (for more information on forcing algebras see \cite{brennerforcingalgebra},
\cite{brenneraffine}):

\begin{definitionintro} 
Let $R$ be a commutative ring, $I=(f_1, \ldots , f_n)$ an ideal and $f \in R$ another element. Then
the \emph{forcing algebra} of these (forcing) data is 
\[  A=R[T_1, \ldots, T_n] /(f_1T_1 + \ldots + f_nT_n+f ) \,  . \]
\end{definitionintro}

Intuitively, when we divide by the forcing equation $f_1T_1 + \ldots + f_nT_n+f $ we are ``forcing'' the element
$f$ to belong to the expansion of $I$ in $A$. Besides, it has the universal property that for any $R$-algebra $S$
such that $f\in IS$, there exists a (non-unique) homomorphism of $R$-algebras $\theta: A\rightarrow S$.

Furthermore, the formation of forcing algebras commutes with arbitrary change of base. Formally, if
$\alpha: R\rightarrow S$ is a homomorphism of rings, then 
\[S\otimes_R A\cong S[T_1,...,T_n]/(\alpha_1(f_1)T_1+\cdots+\alpha_n(f_n)T_n+\alpha(f))\]
is the forcing algebra for the forcing data $\alpha(f_1),\ldots ,\alpha(f_n),\alpha(f)$ . In particular,
if $\idealp \in X =\Spec R$, then the fiber of (the forcing morphism) $\varphi: Y:=\Spec A \rightarrow X:=\Spec R$ over $\idealp $, $\varphi^{-1}( \idealp)$, is the scheme theoretical
fiber $\Spec (\kappa (\idealp)\otimes_R A)$, where $\kappa (\idealp)=R_\idealp/ \idealp R_\idealp$ is its residue field.
In this case, the fiber ring $\kappa (\idealp) \otimes_R A$ is the forcing algebra over $\kappa (\idealp)$
corresponding to the forcing data $f_1(\idealp),\ldots ,f_n(\idealp),f(\idealp)$, where we
denote by $g(\idealp)\in \kappa (\idealp)$, the image (the evaluation) of $g\in R$ inside the residue field $\kappa (\idealp)=R_{\idealp}/\idealp R_{\idealp}$. 
Also, note that for any $f_i$ $A_{f_i}\cong R_{f_i}[T_1,...,\check{T_i},...,T_n]$, via the $R_{f_i}-$homomorphism sending
 $T_i\mapsto -\sum_{j\neq i}(f_j/f_i)T_j-(f/f_i)$ and $T_r\mapsto T_r$ for $r\neq i$.

An extreme case occurs when the forcing data consists only of $f$. Then, we define $I$ as the zero ideal. 
Therefore $A=R/(f)$. 

Besides, if $n=1$, then intuitively the forcing algebra $A=R[T_1]/(f_1T_1-f)$ can be consider as the graphic of the ``rational'' 
function $f/f_1$. We will explore this example in more detail in chapter two.

By means of forcing algebras and forcing morphisms one can rewrite the fact that the element $f$ belongs to a particular closure operations of $I$.
 We shall illustrate this now.

Firstly, the fact that $f\in I$ is equivalent to the existence of a homomorphism of $R-$algebras $\alpha:A\rightarrow R$, which is equivalent
 at the same time to the existence of a section $s:X\rightarrow Y$, i.e. $\varphi\circ s=Id_X$.

Secondly, $f$ belongs to the radical of $I$ if and only if $\varphi$ is surjective. In fact, suppose that $\varphi$ is surjective and let us
 fix a prime ideal $\fop\in X$ containing $I$. Then, $\varphi^{-1}(x)={\rm Spec}\,\kappa(\fop)\otimes A\neq \emptyset$, that means,
 $\kappa(\fop)\otimes A=\kappa(\fop)[T_1,...,T_n]/(f_1(\fop)T_1+\cdots+f_n(\fop)T_n+f(\fop))\neq 0$. But, each $f_i(\fop)=0$, since 
$f_i\in \fop$, therefore $f(\fop)$ is also zero, thus $f\in \fop$. In conclusion, $f\in \cap_{\fop\in V(I)}\fop= \rad I$. Conversely,
 suppose that $f\in \rad I$ and take an arbitrary prime $\fop\in X$. Then, if $I$ is not contained in $\fop$, then some $f_j(\fop)\neq0$ 
and so $\kappa(\fop)\otimes A\neq0$, that means $\varphi^{-1}(\fop)\neq\emptyset$. Lastly, if $I\subseteq\fop$ then $f\in \fop$, and therefore 
$\kappa(\fop)\otimes A=\kappa(\fop)[T_1,...,T_n]\neq0$ and thus $\varphi^{-1}(\fop)=\mathbb{A}_{\kappa(\fop)}^{n-1}\neq\emptyset$. In 
conclusion $\varphi$ is surjective.

Thirdly, let us review the definition of the tight closure of an ideal $I$ of a commutative ring $R$ of characteristic $p>0$. We say that 
$u\in R$ belongs to the \emph{tight closure} of $I$, denoted by $I^*$, if there exists a $c\in R$ not in any minimal prime, such that for all $q=p^e\gg0$,
 $cu^q\in I^{[q]}$, where $I^{[q]}$ denotes the expansion of $I$ under the $e-$th iterated composition of the Frobenius homomorphism 
$F:R\rightarrow R$, sending $x\rightarrow x^p$. Tight Closure is one of the most important closure operations in commutative algebra and 
was introduced in the 80s by M. Hochster and C. Huneke as an attempt to prove the ``Homological Conjectures'' (for more information 
\cite{Hochsterhuneketightclosure}). Let $(R,m)$ be normal local domain of dimension two. Suppose that $I=(f_1,...,f_n)$ is an $m-$primary 
ideal and $f$ is an arbitrary element of $R$. Then, $f\in I^*$ if and only if $D(IA)=\Spec\,A\smallsetminus V(IA)$ is not an affine scheme, i.e.
is not of the form $\Spec\,D$ for any commutative ring $D$ (see \cite[corollary 5.4.]{brenneraffine}).

Forth, the origin of the forcing algebras comes from the definition of the solid closure, as an effort to defining a closure operation 
for any commutative ring, independent the characteristic (see \cite{hochstersolid}). Explicitly, let $R$ be a Noetherian ring, let $I\subseteq R$ an ideal and
 $f\in R$. Then, $f$ belongs to the \emph{solid closure} of $I$ if for any maximal ideal $m$ of $R$ and any minimal ideal $\idealq$ of its 
completion $\widehat{R}_m$, for the complete local domain $(R'=\widehat{R}_m/\idealq,m')$ holds that the $d-$th local cohomology of the 
forcing algebra $A'$, obtained after the change of base $R\hookrightarrow R'$, $H_m^d(A')\neq0$, where $d=\dim R'$ 
(see \cite[Definition 2.4., p. 15]{brennerbarcelona}).

Fifth, let us consider an integral domain $R$ and an ideal $I\subseteq R$. Then, $u$ belongs to the \emph{plus closure} of $I$, denoted by $I^+$, 
if there exists a finite extension of domains $R\hookrightarrow S$, such that $f\in IS$. If $R$ is a Noetherian domain and $I=(f_1,...f_n)\subseteq R$
 is an ideal and $f\in R$, then $f\in I^+$ if and only if there exists an irreducible closed subscheme $\widetilde{Y}\subseteq Y=\Spec\,A$
 such that $\dim\widetilde{Y}=\dim X$, $\varphi(\widetilde{Y})=X$ and for each $x\in X$, $\varphi^{-1}(x)\cap\widetilde{Y}$ is finite 
(for an projective version of this criterion see \cite[Proposition 3.12]{brennerbarcelona}).

Finally, if $R$ denotes an arbitrary commutative ring and $I\subseteq R$ is an ideal, then we say that $u$ belongs to the \emph{integral 
closure} of $I$, denote by $\overline{I}$, if there exist $n\in \mathbb{N}$, and $a_i\in I^i$, for $i=1,...,n$, with

\[u^n+a_1u^{n-1}+\cdots+a_n=0.\]

We proved in \cite[Chapter 2]{brennergomezconnected} that $f\in \overline{I}$, where $I=(f_1,...,f_n)\subseteq R$, if and only if the 
corresponding forcing morphism $\varphi$ is universally connected, i.e. $\Spec (S\otimes_RA)$ is a connected space for any Noetherian 
change of base $R\rightarrow S$, such that $\Spec\,S$ is connected.

From this we derive a criterion of integrity for fractions $r/s\in K(R)$, where $R$ denotes a Noetherian domain, in terms of the universal 
connectedness of the natural forcing algebra $A:=R[T]/(sT+r)$. 

In view of this results, it seems very natural to study in commutative algebra the question of finding a closure operation with ``good'' 
properties (see \cite{epsteinclosureguide}), in terms of finding suitables algebraic-geometrical as well as topological or homological 
properties of the forcing morphism. This approach goes closer to the philosophy of Grothendieck's EGA of defining and studying the 
objects in a relative context (see \cite{hartshornealgebraic} and \cite{EGAI}). A simple and deep example of this approach is the counterexample
 to one of the most basic and important open questions on tight closure: the Localization Problem i.e., the question whether tight closure commutes with 
localization. This was done by H. Brenner and P. Monsky using vector bundles techniques and geometric deformations of tight closure 
(see \cite{brennerbarcelona}).

Besides, another good example going in this direction is a general definition of forcing morphism for arbitrary schemes. Specifically, 
let $X$ and $Y$ be arbitrary schemes. Suppose that $i:Z\rightarrow X$ is a closed subscheme and $f\in \Gamma(X,{\mathcal O}_X)$ is a global
 section. Then, a morphism $\varphi:Y\rightarrow X$ is a \emph{forcing morphism} for $f$ and $Z$, if 

\emph{i)} the pull-back of the restriction
 of $f$ to $Z$, $f_{|Z}=i_Z^{\sharp}(f)$ is zero, i.e.  $\varphi_{|\varphi^{-1}(Z)}^{\sharp}(f_{|Z})=0$; 

\emph{ii)} for any morphism 
of schemes $\psi: W\rightarrow X$ with the same property, i.e. $\psi_{|\psi^{-1}(Z)}^{\sharp}(f_{|Z})=0$, there exists a (non-unique) 
morphism $\widetilde{\psi}:W\rightarrow Y$ such that $\psi=\varphi\circ\widetilde{\psi}$. 
It is a natural generalization of the universal 
property of a forcing algebra but in the relative context and in a category including that of commutative rings with unity.
\newline\indent
In general, for a integral base ring there are two kinds of irreducible components for the prime spectrum of a forcing algebra. In fact, let $R$ be a noetherian domain, $I=(f_1 , \ldots ,f_n) $ an ideal, $f \in R$ and
$A=R[T_1, \ldots , T_n]/(f_1T_1+ \ldots +f_nT_n+f)$
the forcing algebra for these data. 
For $I \neq 0$ there exists a unique irreducible component $H \subseteq \Spec A$ (``horizontal component") with the property of dominating the
base $\Spec R$ (i.e. the image of $H$ is dense).
This component is given (inside $R[ T_1, \ldots , T_n]$) by
\[\idealp = R[ T_1, \ldots , T_n] \cap (f_1T_1+ \ldots + f_nT_n+f) Q(R)[ T_1, \ldots , T_n] \, ,\]
where $Q(R)$ denotes the quotient field of $R$.

All other irreducible components of $\Spec A$ (``vertical components") are of the form 
\[V( \idealq  R[ T_1, \ldots , T_n] )\]
for some prime ideal $\idealq \subseteq R$ which is minimal over $(f_1, \ldots , f_n,f)$ (for a complete proof of this fact and more information see \cite[Lemma 2.1.]{brennergomezconnected}).

Finally, let us describe briefly the content of the following sections of this paper.

We study the case corresponding to a sub-module $N$ of a finitely generated
module $M$ and an arbitrary element $s\in M$. This case corresponds to forcing algebras with several forcing equations

\[A=R[T_1,\ldots ,T_n]/\left\langle \left(\begin{array}{ccc} f_{11}& \ldots &f_{1n} \\ \vdots& \ddots & \vdots \\ f_{m1} & \ldots & f_{mn} \end{array}  \right) \cdot \left( \begin{array}{c}T_1 \\ \vdots \\ T_n \end{array}\right)+\left( \begin{array}{c}f_1\\ \vdots \\ f_m\end{array}
\right)   \right\rangle. \]

Even very basic properties of forcing algebras are not yet understood, and these paper deal in some extent with these questions.

For example, we describe how to perform elementary row and column operations on the forcing algebra by means of considering 
elementary affine linear isomorphisms and a specific relation between the regular sequences of forcing elements and the fitting ideals of 
the corresponding forcing matrix (\S1). 

Besides, the irreducibility of the forcing algebra over a noetherian domain can be obtained just by assuming that the height of $I$ is bigger or equal 
 that 2 (\S2).
 
Now, we show with two kinds of examples that for the reducedness of the forcing algebra it is not enough to have the reducedness of the base. Besides,
as a natural consequence of studying this, we see that a noetherian ring is the product of fields 
if and only if any element belongs to the ideal generated by its square power (\S3). Moreover if we add to the condition the possibility 
 that $I$ is the whole base $R$, then we get a complete characterization of the integrity of the forcing algebra over UFDs (\S4).
 
Moreover, with a very natural approximation through simple examples and increasing just step by step the dimension of the base space we obtain, in the
 case that our base is the ring of polynomials over a perfect field, a quite 
simple criterion of normality for the forcing algebras by means of the size of the codimensions of the ideal $I$ and the ideal $I+D$, where 
$D$ is generated by the partial derivatives of the data. In the case that we are working over an algebraic closed field and our base is 
the ring of coordinates of an irreducible variety $X$, the normality of the (forcing) hyperplane defined by the forcing equation can be 
characterized by imposing the condition that the codimension of the singular locus of $X$ in the whole affine space is a least three (\S5). Here, it is worth 
to know that we present the formal proof of this criterion as well as the ``informal'' way in which this criterion was originally found i.e., 
a way of analyzing simple examples increasing gradually the generality of the variables describing them, in order to develop slowly a deeper 
intuition of the phenomenon involved.

As an instance of the importance of the examples we analyze an specific forcing algebra, that we call the ``enlightening'' example, because
it is a very natural recurring point to verify the different results that we have already studied. On this respect this example is not less 
important that the former results. Instead of that, it is another valuable result where the different propositions and theorems 
come together (\S6). 

Moreover, we compute explicitly the normalization of a forcing algebra coming from the examples guiding us to find the normality criterion.
And again, on that process we deal with very elementary and fundamental questions related with normal domains and denominators ideals (\S7).


\section{Forcing Algebras with several Forcing Equations}
Now, we study just a few elementary properties of forcing algebras which are defined by several forcing equations and which leads us in a natural 
way to the understanding of the linear algebra over the base ring $R$. This section could be understood as a simple invitation to this 
barely explored field of mathematics. Here we recommend for further reading \cite{brennerforcingalgebra}. 
In this case we can write the forcing algebra in a matrix form:
\[A=R[T_1,\ldots ,T_n]/\left\langle \left(\begin{array}{ccc} f_{11}& \ldots &f_{1n} \\ \vdots& \ddots & \vdots \\ f_{m1} & \ldots & f_{mn} \end{array}  \right) \cdot \left( \begin{array}{c}T_1 \\ \vdots \\ T_n \end{array}\right)+\left( \begin{array}{c}f_1\\ \vdots \\ f_m\end{array}
\right)   \right\rangle. \]

This corresponds to a submodule $N \subseteq M $ of finitely generated $R$-modules and an element $f \in M$ via a
free representation of these data (see \cite[p. 3]{brennerforcingalgebra}).

Now, we study how  the forcing algebra behaves when we make elementary row or column operations in the associated matrix $M$. 
Remember that the matrix notation in the forcing algebra just means that we are considering the ideal generated by the rows of the
resulting matrix, after performing the matrix multiplications and additions.

 First, if $l_1,...,l_m$ denote the rows of $M$, and $c\in R$ denote an arbitrary constant, making a row operation, $l_j\mapsto cl_j+l_i$,  
($i\neq j$; that is changing the $jth$ row by $c$ times 
the $ith$ row  plus the $jth$ row) just means changing the generators $h_1,...,h_m$ to the new generators 
$h_1,...,h_{j-1},ch_i+h_j,h_{j+1},...,h_m$. The ideal generated by these two groups of forcing elements coincides and therefore the 
associated forcing algebra are the same. Similarly, if we make operations of the form $l_i\mapsto l_j$ and $l_i\mapsto cl_i$,
 where $c$ is an invertible element of $R$, which correspond to change two rows and to multiply a row by an element in $R$, then the 
forcing algebra does not change.

For the column operation, the problem is a little bit more subtle. Let $\left\lbrace C_1,...,C_n\right\rbrace $ be the columns of 
the matrix $A$. Consider the column operation $\mapsto dC_i+C_jC_j$, where $d\in R$. Now, define the following automorphism 
$\varphi$ of the ring of polynomials $R[T_1,...,T_n]$ sending $T_s\mapsto T_s$, for $s\neq i$, and $T_i\mapsto cT_j+T_i$. 
Now, 
\[\varphi(h_r)=f_{r1}T_1+...+f_{ri}(cT_j+T_i)+...+f_{rn}T_n=\]
 \[f_{r1}T_1+...+(cf_{ri}+f_{rj})T_j+...+f_{rn}T_n.\] 
and then $\varphi$ induces an isomorphism between the forcing algebra with matrix $M$ and the forcing algebra with matrix obtained 
from $M$ performing the previous column operation. Similarly, for operations of the form $C_i\mapsto C_j$ and $C_i\mapsto dC_i$, 
where $d\in R$ is an invertible element, the resulting forcing algebras coincide. 
Now, if $R$ is a field and the rank of the associated matrix $M$ is $r$, where $r\leq {\rm min}(m,n)$, then performing row and column 
operations on the associated matrix we can obtained a matrix form by the $r\times r$ identity matrix in the upper-left side and 
with zeros elsewhere. 

Therefore, the elements $h_i$ have just the following simple form: $h_i=T_i+g_i$, for $i=1,...,r$ and $h_i=g_i$, 
for $i>r$, and some $g_i\in R$ (this $g_i$ could appear just in the nonhomogeneous case, corresponding to the changes made on the 
independent vector form by the $f_j$). Thus the forcing algebra $A$ is isomorphic either to zero (in the case that there exists $g_i\neq0$, 
for some $i>r$) or to $k[T_{r+1},...,T_n]$. This allow us to present the following lemma describing the fibers of a forcing algebra 
as affine spaces over the base residue field.

\begin{lemma}
\label{forcingfiber}
 Let $R$ be a commutative ring and let $A$ be the forcing algebra corresponding to the data
$\left\lbrace f_{ij}, f_i \right \rbrace$. Let $\idealp \in X$ be an arbitrary prime ideal of $R$ and $r$ the rank of the
matrix $\left\lbrace f_{ij}(\idealp) \right\rbrace$. Then the fiber over $\idealp $ is empty or isomorphic to the
affine space ${ \mathbb A}^{n-r}_{\kappa (\idealp)}$.
\end{lemma}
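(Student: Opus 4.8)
The key observation is that the fiber over $\idealp$ is itself a forcing algebra, now over the residue field $\kappa(\idealp)$. Indeed, by the base-change property of forcing algebras recalled in the introduction, $\kappa(\idealp)\otimes_R A$ is the forcing algebra over $\kappa(\idealp)$ associated to the matrix $\left(f_{ij}(\idealp)\right)$ and the vector $\left(f_i(\idealp)\right)$, where $g(\idealp)$ denotes the image of $g$ in $\kappa(\idealp)$. So the problem reduces immediately to the case of a forcing algebra over a field, which is exactly the situation analyzed in the paragraph preceding the lemma.

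The plan is then to invoke that discussion directly. Over the field $\kappa(\idealp)$, the matrix $\left(f_{ij}(\idealp)\right)$ has some rank $r$, and by performing elementary row and column operations — which, as shown above, change the forcing algebra only up to isomorphism (row operations leave the generated ideal literally unchanged; column operations induce an automorphism of the polynomial ring) — we may bring the matrix to the normal form consisting of an $r\times r$ identity block in the upper-left corner and zeros elsewhere. One must also track the effect of these operations on the inhomogeneous vector $\left(f_i(\idealp)\right)$; row operations mix its entries, and the column operations do not touch it, so after reduction the defining relations take the form $h_i = T_i + g_i$ for $i \le r$ (with $g_i \in \kappa(\idealp)$) and $h_i = g_i$ for $i > r$. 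Consequently the forcing algebra is isomorphic to the quotient of $\kappa(\idealp)[T_1,\ldots,T_n]$ by these relations: if some $g_i \neq 0$ with $i>r$ the algebra is the zero ring and the fiber is empty, while otherwise the relations $h_i = T_i+g_i$ ($i\le r$) simply eliminate the variables $T_1,\ldots,T_r$, leaving $\kappa(\idealp)[T_{r+1},\ldots,T_n] \cong \kappa(\idealp)[T_1,\ldots,T_{n-r}]$, whose spectrum is $\mathbb{A}^{n-r}_{\kappa(\idealp)}$.

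The only mild subtlety — and the one place where a little care is needed rather than routine verification — is bookkeeping the inhomogeneous part: one should be careful that the column operations genuinely induce a ring automorphism fixing the $f_i(\idealp)$ (they only substitute the $T_s$), so that no spurious constants are introduced, and that the row operations, while altering the $g_i$, never destroy the conclusion because the ideal they generate is unchanged. Once this is in place, the isomorphism type of $\kappa(\idealp)\otimes_R A$ depends only on $r = \operatorname{rk}\left(f_{ij}(\idealp)\right)$ and on whether the reduced system is consistent, and the statement follows. I do not expect any genuine obstacle here; the lemma is essentially a packaging of the over-a-field reduction already carried out, transported along the base change $R \to \kappa(\idealp)$.
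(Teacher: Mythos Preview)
Your proposal is correct and follows essentially the same approach as the paper's proof: both reduce to the fiber ring $\kappa(\idealp)\otimes_R A$ via base change, invoke the row/column reduction over the field $\kappa(\idealp)$ carried out just before the lemma, and read off the dichotomy (zero ring versus polynomial ring in $n-r$ variables) from the normal form $h_i=T_i+g_i$ ($i\le r$), $h_i=g_i$ ($i>r$). Your remark about the inhomogeneous vector being untouched by column operations and merely mixed by row operations is accurate and slightly more explicit than the paper's own account, but there is no substantive difference in strategy.
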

\begin{proof}
We know by a previous comment of the introduction that the fiber ring over $\idealp$ is $\kappa (\idealp)\otimes_RA$ which is just 
\[\kappa (\idealp)[T_1,\ldots ,T_n]/\left\langle \left(\begin{array}{ccc} f_{11}(\idealp)&\ldots &f_{1n}(\idealp)\\ \vdots& \ddots &\vdots \\f_{m1}(\idealp)&\ldots &f_{mn}(\idealp)\end{array}  \right) \cdot \left( \begin{array}{c}T_1 \\ \vdots \\ T_n \end{array}\right)+\left( \begin{array}{c}f_1(\idealp)\\ \vdots \\ f_m(\idealp)\end{array}
\right)  \right\rangle \, .\]
Now, making elementary row and column operations on the matrix $(f_{ij}(\idealp))$, as indicated before, we can obtain a matrix with 
zero entries except for the first $r$ entries of the principal diagonal which are ones, plus an independent vector.

In conclusion, after performing all the necessarily elementary operations, we obtain an isomorphism from $A$ to a very simple forcing 
algebra
 \[B=\kappa(\idealp)[T_1,...,T_n]/(T_1+g_1,...,T_r+g_r,g_{r+1},...,g_n),\]
 corresponding to the matrix with zero entries except for the first $r$ entries of the principal diagonal, which are ones. 
But then $B$ is clearly isomorph to the affine ring 
$\kappa(\idealp)[T_{r+1},...,T_n]$, if $g_{r+1}=\cdots=g_n=0$, and $A=0$ otherwise, proving our lemma.
\end{proof}

If $\kappa(\idealp)$ is algebraically closed, then the fiber over a point $\idealp \in \Spec R$ of this forcing algebra is just the 
solution set of the corresponding system of inhomogeneous linear equations over $\kappa (\idealp)$. If the 
vector $(f_1, \ldots , f_m)$ is zero, then we are dealing with a ``homogeneous'' forcing algebra. In this case there is a
(zero- or ``horizontal'') section $s: X =\Spec R \rightarrow Y = \Spec A$ coming from the homomorphism of
$R$-algebras from $A$ to $R$ sending each $T_i$ to zero. This section sends a prime ideal $\idealp \in X$ to the
prime ideal $(T_1,\ldots ,T_n)+\idealp \in Y$.
\begin{remark}
 If all $f_k$ are zero, and $m=n$, then the ideal $\mathfrak{a}$ is defined by the linear forms $h_i=f_{i1}T_1+\cdots +f_{in}T_n$, and in this case we can ``translate'' the fact of 
multiplying by the adjoint matrix of $M$, denoted by ${\rm adj}M$, just to saying that the elements $\det MT_i\in \mathfrak{a}$. In fact, 

\[\left( \begin{array}{c}\det M T_1\\\vdots\\ \det M T_n \end{array}\right)=\det M\cdot I_{nn}\cdot\left( \begin{array}{c}T_1\\\vdots\\T_n\end{array}\right) = {\rm adj}M\cdot M\cdot\left( \begin{array}{c}T_1\\\vdots\\T_n\end{array}\right)={\rm adj}M\cdot \left( \begin{array}{c}h_1\\\vdots\\h_n\end{array}\right)
 \]
where the entries of the last vector belong to $\mathfrak{a}$. From this fact we deduce that, when the determinant of $M$ is a unit in $R$, then
$\mathfrak{a}=(T_1,...,T_n)$ and the forcing algebra is isomorphic to the base ring $R$. Note that the previous argument works also in the 
nonhomogeneous case.
\end{remark}
Now, we study the homogeneous case when the elements $\left\lbrace h_1,...,h_m\right\rbrace$ form a regular sequence. 
First, we need the following general fact about the pure codimension of regular sequences in Noetherian rings: Let $S$ be a Noetherian ring and $\left\lbrace r_1,...,r_m\right\rbrace \subseteq S$ a regular sequence and $I$ the ideal generated 
by these elements. Then the pure codimension of $I$ is $m$. For a proof see \cite{gomezramirezthesis}.

Besides, if $j\in \left\lbrace 1,...,\min(m,n) \right\rbrace$, then we define the Fitting ideals $I_j$ as the ideals generated by
 the minors of size $j$ of the matrix $M$. This definition corresponds to the standard definition of Fitting ideals regarding $M$ 
as a $R$-homomorphism of free modules (see \cite[p. 497]{eisenbud}). 

\begin{proposition}
 Let $R$ be a Noetherian integral domain and $A$ the homogeneous forcing algebra corresponding to the data 
$\left\lbrace f_{ij}\right\rbrace$, with $i=1,...,n$ and $j=1,...,m$. Suppose that the forcing equations 
$\left\lbrace h_1,...,h_m \right\rbrace$ form a regular sequence in $B:=R[T_1,...,T_n]$. Then $m\geq n$ and $I_{{\rm min}(m,n)}\neq (0).$ 
\end{proposition}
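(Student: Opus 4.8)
The plan is to base change to the generic point of $\Spec R$ and thereby turn the regular-sequence hypothesis into ordinary linear algebra over the fraction field $Q(R)$. First I would fix conventions in accordance with the index ranges in the statement: regard the data as the $n\times m$ matrix $M=(f_{ij})$ with $1\le i\le n$ and $1\le j\le m$, so that the rows give $n$ homogeneous forcing forms $h_1,\ldots,h_n$, each $h_i=\sum_{j=1}^{m} f_{ij}T_j$ being linear in the $m$ indeterminates $T_1,\ldots,T_m$. Put $I=(h_1,\ldots,h_n)\subseteq B=R[T_1,\ldots,T_m]$. Since no $h_i$ has a constant term we have $I\subseteq(T_1,\ldots,T_m)$, a proper ideal; this containment will be used twice.

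Next I would pass to $Q(R)$. The localization $R\to Q(R)$ is flat, hence so is $B\to Q(R)\otimes_R B=Q(R)[T_1,\ldots,T_m]$, and because the extended ideal still lies in the irrelevant ideal $(T_1,\ldots,T_m)$ it remains proper; therefore $h_1,\ldots,h_n$ is again a regular sequence, now in $Q(R)[T_1,\ldots,T_m]$. Over a field a regular sequence of \emph{linear} forms is exactly a linearly independent family: if some $h_i$ were a $Q(R)$-linear combination of the others it would lie in the ideal they generate and hence be a zero-divisor modulo them, contradicting regularity; conversely an independent family of linear forms extends to a linear homogeneous system of parameters, which is a regular sequence because the polynomial ring is Cohen-Macaulay. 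So $h_1,\ldots,h_n$ are $Q(R)$-linearly independent.

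Finally I would read off both conclusions. Independence of $n$ linear forms in $m$ variables forces $n\le m$, that is $m\ge n$, and it says precisely that $M$ has rank $n=\min(m,n)$ over $Q(R)$; hence some minor of size $\min(m,n)$ is nonzero in $Q(R)$, so nonzero in $R$, and the Fitting ideal satisfies $I_{\min(m,n)}\neq(0)$. As an independent check on the inequality one can avoid localization and argue by codimension: by the fact recalled just before the statement the regular sequence forces $I$ to have pure codimension $n$, so a minimal prime of $I$ chosen inside $(T_1,\ldots,T_m)$ has codimension $n$, and comparison with $\codim(T_1,\ldots,T_m)=m$ gives $n\le m$ once more. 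The step I expect to be the main obstacle is precisely this passage to the generic fibre: one must guarantee that the regular sequence survives the base change $R\to Q(R)$, and the decisive point that makes this work is that $I$ and its extension remain inside the irrelevant ideal and hence proper, so that flatness of the localization preserves regularity.
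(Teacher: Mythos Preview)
Your argument is correct. One organizational remark: you have interchanged the roles of $m$ and $n$ relative to the paper's standing conventions (throughout the section there are $n$ indeterminates $T_1,\ldots,T_n$ and $m$ forcing equations $h_1,\ldots,h_m$, so that the intended conclusion is $n\geq m$; the statement as printed contains a typo, as the paper's own proof ends with ``which implies that $n\geq m$''). Once a convention is fixed, your mathematics goes through unchanged.

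Your route differs from the paper's in where the work is done. For the nonvanishing of the top Fitting ideal the paper does \emph{not} transport the regular sequence to $Q(R)$: it only uses $Q(R)$ to detect a linear dependence among the rows, then clears denominators to obtain a relation $\beta h_j=\sum_{i<j}\gamma_i h_i$ with $0\neq\beta\in R$, and concludes that $h_j$ annihilates the nonzero class of $\beta$ in $B/(h_1,\ldots,h_{j-1})$, contradicting regularity directly in $B$. You instead push the whole regular sequence to $Q(R)[T_1,\ldots,T_n]$ via flatness of the localization together with properness of the extended ideal, and then read off both conclusions at once from linear algebra over the field; this is cleaner and unifies the two assertions, at the price of the (easy) verification that the sequence survives base change. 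For the inequality $n\geq m$ the paper argues via dimensions and codimensions, essentially the alternative argument you sketch at the end: the regular sequence forces $I$ to have pure codimension equal to its length, while $I\subseteq(T_1,\ldots,T_n)$ bounds that codimension by $n$.
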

\begin{proof}
 First, note that the ideal $I$ generated by the forcing elements is contained in the homogeneous ideal $P=(T_1,...,T_n)$, therefore 
we see that the dimension of $A$ is smaller or equal to the dimension of $B/P$, which is exactly the dimension of $R$. On the other 
hand, if we consider a saturated chain of primes in $A$, \[P_0\nsubseteq P_1\nsubseteq...\nsubseteq P_{\dim A},\] where $P_0$ is a 
minimal prime over $I$, then the former comments ${\rm ht}(P_0)=m$ and thus completing the former chain with a saturated 
chain for $P_0$ in $B$ of length $m$, we see that $\dim B\geq m+\dim A.$
Now, noting that $\dim B=\dim A+n$, since $A$ is Noetherian, we get $n+\dim A\geq m+\dim A$, which implies that $n \geq m$.

For the second part, let's consider the matrix $M$ in the field of fractions $K$ of $R$. It is an elementary fact that the rank of 
$M$ is $\leq s$ if and only if every minor of size $s+1$ of $M$ is zero. This follows from the fact that performing row operations on 
a matrix change the values of fixed minor of size $r$ of the original matrix just by a nonzero constant term of another minor of 
size $r$ of the changed matrix (this is a general way of saying that performing a row operation is just multiplying by an invertible 
matrix and therefore the fact that the determinant of the matrix is zero or not is independent of the row operation). 

Now, suppose 
by contradiction that $I_{{\rm min}(m,n)}=0$, then the rank of $M$ in $K$ is strictly smaller than ${\rm min}(m,n)$ and thus the rows of $M$ are 
linearly dependent in $K$. Without loss of generality, assume that there is $j\in \left\lbrace 1,...,{\rm min}(m,n)\right\rbrace$, such 
that the $jth$ row of $M$, $l_j$, is a linear combination of the former ones, that is, there exist 
some $\alpha_i\in K$, such that $l_j=\sum_{i=1}^{j-1}\alpha_il_i$. Now, after multiplying by a nonzero common multiple 
$\beta \in R$ of the denominators, we get an equation of the form $\beta l_j=\sum_{i=1}^{j-1}\gamma_il_i$, for some $\gamma_i \in R$.
  But, seeing this equality in $A$, (which means just multiplying this equality by the $n\times1$ vector given by the 
$T_i$) we see that $\beta h_j=\sum_{i=1}^{j-1}\gamma_i h_i$, which implies that $h_j$ is a zero divisor in $B/(h_1,...,h_{j-1})$, 
because $\beta \notin (T_1,...,T_n)$ ($\beta$ is a nonzero constant polynomial in $B$) and therefore $\beta \notin (h_1,...,h_{j-1})$. This contradicts the fact that $I$ is generated by a regular sequence.
\end{proof}
The converse of the previous proposition is false as the following example shows.
\begin{example}
Consider $R=k[x];B=R[T_1,T_2];h_1=xT_1-xT_2$ and $h_2=xT_1+xT_2$, where $k$ is a field of ${\rm char}k\neq 2$. Then $m=n=2$ and the 
determinant of the associated matrix is $2x^2$, but the sequence $\left\lbrace h_1,h_2\right\rbrace$ is not regular, in fact, 
the ideal $I$ generated by its elements has height just one, because it is contained in the principal ideal $(x)$, and therefore 
by former comments, $I$ cannot be generated by a regular sequence. Geometrically, the variety defined by $I$ 
is the union of a line $V(T_1,T_2)$ and a plane $V(x)$.

Intuitively, this example comes from the following observation. Suppose that we have the forcing algebra with equations $h_1'=T_1+T_2$ and 
$h_2=T_1-T_2$. If we consider the line $V(T_1-T_2,T_1+T_2)=V(T_1,T_2)$ 
(whose associated determinant is $2\neq0$) and multiplying these equations by $x$, we obtain a variety that is automatically 
the union of this line with the plane $V(x)\subseteq k^3$, which has bigger dimension, but the associated determinant of the new variety (our former 
example) 
is just $x^2$ times the former determinant. This process gives us a new variety with nonzero determinant but with an ideal with 
smaller codimension.
\end{example}

\section{Irreducibility}
Here we shall see that if $A$ is a forcing algebra over a Noetherian integral domain such that ${\rm ht}(f,f_1,...,f_n)\geq 2$, where 
$\{f_1,...,f_n,f\}$ is the forcing data, then $A$ is an irreducible ring (i.e. $A$ has just one minimal prime).

\begin{theorem}\label{irreducibility}
 Let $R$ be a Noetherian integral domain;
 \[A=R[T_1,...,T_n]/(f_1T_1+\cdots+f_nT_n+f);\]
 $h=f_1T_1+\cdots+f_nT_n+f$, where $f_1,...,f_n,f\in R$
 and $J=(f,f_1,...,f_n)$. Assume that ${\rm ht}J\geq2$, then $A$ is an irreducible ring. 
\end{theorem}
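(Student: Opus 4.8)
The plan is to pin down the minimal primes of the principal ideal $(h)$ inside the Noetherian domain $B = R[T_1,\ldots,T_n]$, since these correspond bijectively to the minimal primes of $A = B/(h)$. First I would clear away the vacuous case $I := (f_1,\ldots,f_n) = 0$: then $J = (f)$ is a principal ideal, so Krull's Hauptidealsatz gives $\operatorname{ht} J \le 1$, contradicting the hypothesis. Hence $I \neq 0$, and therefore $h = f_1 T_1 + \cdots + f_n T_n + f$ is a nonzero, non-unit element of $B$ (it has $T$-degree $1$). By Krull's principal ideal theorem every minimal prime $\mathfrak{P}$ of $(h)$ then satisfies $\operatorname{ht}\mathfrak{P} = 1$.

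Next I would show that every such $\mathfrak{P}$ avoids at least one of the $f_j$. Suppose to the contrary that $f_1,\ldots,f_n \in \mathfrak{P}$; then also $f = h - \sum_j f_j T_j \in \mathfrak{P}$, so $J \subseteq \mathfrak{P} \cap R =: \mathfrak{q}$, whence $\operatorname{ht}\mathfrak{q} \ge \operatorname{ht} J \ge 2$. But $\mathfrak{q}B \subseteq \mathfrak{P}$ and $\operatorname{ht}(\mathfrak{q}B) = \operatorname{ht}\mathfrak{q}$ (standard for polynomial extensions), so $\operatorname{ht}\mathfrak{P} \ge 2$, contradicting the previous step. So fix $j$ with $f_j \notin \mathfrak{P}$. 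Using the isomorphism $A_{f_j} = B_{f_j}/(h) \cong R_{f_j}[T_1,\ldots,\check{T_j},\ldots,T_n]$ recorded in the introduction, the ideal $(h)B_{f_j}$ is prime; since $f_j \notin \mathfrak{P}$, the localized prime $\mathfrak{P}B_{f_j}$ is minimal over $(h)B_{f_j}$, hence equals it. Contracting back, $\mathfrak{P} = (h)B_{f_j} \cap B$. Finally $(h) \subseteq (h)B_{f_j} \cap B \subseteq (h)Q(R)[T_1,\ldots,T_n] \cap B = \mathfrak{p}$, and since $\mathfrak{p}$ (the horizontal prime of the introduction) is a minimal prime of $(h)$, minimality forces $\mathfrak{P} = \mathfrak{p}$. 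Thus $(h)$ has exactly one minimal prime, i.e.\ $A$ is irreducible.

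Alternatively, and more briefly, one can quote the structure statement recalled in the introduction (\cite[Lemma 2.1]{brennergomezconnected}): the irreducible components of $\Spec A$ are the unique horizontal component $V(\mathfrak{p})$ (present because $I \neq 0$) together with the vertical components $V(\mathfrak{q}R[T_1,\ldots,T_n])$ for $\mathfrak{q}$ minimal over $J$. Any $\mathfrak{q}$ minimal over $J$ has $\operatorname{ht}\mathfrak{q} \ge \operatorname{ht} J \ge 2$, hence $\operatorname{ht}(\mathfrak{q}R[T_1,\ldots,T_n]) \ge 2$; but each irreducible component of $\Spec A = V((h))$ is cut out by a minimal prime of the principal ideal $(h)$, which has height $\le 1$ in the Noetherian domain $B$. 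So no vertical component exists, and $A$ is irreducible.

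The argument is short, and the only points needing care are bookkeeping: isolating the degenerate case $I = 0$, making sure the constant term $f$ cannot produce an extra minimal prime when all the $f_j$ lie in $\mathfrak{P}$, and correctly gluing the generic-fibre description $(h)Q(R)[T_1,\ldots,T_n] \cap B$ with the local descriptions on the $D(f_j)$ to identify every minimal prime with $\mathfrak{p}$. The actual content is carried entirely by Krull's principal ideal theorem applied in $R[T_1,\ldots,T_n]$.
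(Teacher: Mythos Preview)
Your proposal is correct, and your alternative (second) approach is essentially identical to the paper's proof: both invoke the structural description from \cite[Lemma~2.1]{brennergomezconnected} and then observe that for any $\mathfrak{q}$ minimal over $J$ one has $\operatorname{ht}(\mathfrak{q}B)\geq\operatorname{ht}\mathfrak{q}\geq\operatorname{ht}J\geq 2$, so $\mathfrak{q}B$ cannot be a minimal prime of the principal ideal $(h)$ by Krull's Hauptidealsatz, leaving only the horizontal component.

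Your first approach differs in that it is self-contained: rather than quoting the horizontal/vertical dichotomy as a black box, you redo the relevant part of it directly. You show any minimal prime $\mathfrak{P}$ of $(h)$ must avoid some $f_j$ (else $J\subseteq\mathfrak{P}\cap R$ forces $\operatorname{ht}\mathfrak{P}\geq 2$), then use the explicit isomorphism $A_{f_j}\cong R_{f_j}[T_1,\ldots,\check T_j,\ldots,T_n]$ to pin $\mathfrak{P}$ down as $(h)B_{f_j}\cap B$, and finally squeeze it between $(h)$ and the horizontal prime $\mathfrak{p}=(h)Q(R)[T_1,\ldots,T_n]\cap B$. This buys independence from the cited lemma at the cost of a few extra lines; the paper instead absorbs all of that into the reference and keeps only the height estimate. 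Either way the essential content is Krull's principal ideal theorem in $B$, as you note.
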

\begin{proof}
By Lemma \cite[Lemma 2.1.(2)]{brennergomezconnected}, it is enough to see that for any minimal prime $\idealq\in R$ of $J$, $\idealq B$ is not minimal over
$(h)$, because on that case, $A$ has just the horizontal component, and therefore is irreducible. 

Let $\idealq\in R$ be minimal over $J$. Then, 
\[{\rm ht}\idealq B\geq {\rm ht}\idealq\geq{\rm ht}J\geq2.\]
Therefore $\idealq B$ is not minimal over $(h)$, since by Krull's Principal Ideal Theorem the minimal primes over a principal ideal have 
height smaller or equal than one.
\end{proof}

\section{(Non)Reduceness}
In this section we study  the (non)reducedness of forcing algebras over a reduced base ring $R$. First, for a base field $k$ \cite[Lemma 3.1]{brennergomezconnected}
 shows that any forcing algebra is isomorphic to a ring of polynomials over $k$ or the zero algebra, therefore it is reduced. 
\newline\indent
Now, if $R$ is a local ring, let us first stay an elementary remark concerning a generalization of the Monomial Conjecture (MC) (see \cite{hochstercanonical}) in dimension
 one. 
\newline\indent

In dimension one (CM) just says that if $x\in m$ does not belong to any minimal prime ideal of $R$ then $x^n\notin (x^{n+1})$ for all
nonnegative integer $n$. In the next remark we will prove a generalization of this fact for a quasi-local ring, that is, not necessarily Noetherian.
\begin{remark}\label{mcindimone}
 Let $(R,m)$ be a quasi-local ring and $x\in m$. Then, there exists a positive integer $n$ such that $x^n\notin (x^{n+1})$, if and only if $x$ is
  a nilpotent or a unit.
  
  In fact, one direction is trivial, for the other one,
assume that $x$ is neither nilpotent nor a unit and that there exists $n\in \mathbb{N}$ and $y\in R$ such that $x^n=yx^{n+1}$,
thus $x^n(1-yx)=0$, but $1-yx\notin m$, therefore it is a unit, then $x^n=0$, which is a contradiction.
\end{remark}

\begin{example}

Let $(R,m)$ be a quasi-local reduced ring, which is not a field, and $f\in m\smallsetminus \{0\}$. Then, the trivial forcing algebra $A:=R/(f^2)$
is non-reduced because clearly $\overline{f}\in {\rm nil}A$ and by the previous Remark $\overline{f}\neq A$. So, there are always non-reduced
 forcing algebras over quasi-local reduced base rings, which are not a field.
 
\end{example}
Now, we want to study in which generality we can guarantee the existence of an element $f\in R$ such that $f\notin(f^2)$. 
The following Proposition gives a compact characterization of the fact that any element $f\in R$ belongs to $(f^2)$.

\begin{proposition}
A commutative ring with unity $R$ is reduced of dimension zero if and only if for any element $f\in R$, holds that $f\in (f^2)$.
In particular, if $R$ is noetherian then it is equivalent to the fact that $R$ is a finite direct product of fields. 
\end{proposition}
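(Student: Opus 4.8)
The key thing I would notice first is that the condition ``$f\in(f^2)$ for every $f\in R$'' says exactly that $R$ is von Neumann regular: if $f=f^2g$ then $f=fgf$, and conversely $f=fxf$ gives $f=f^2x$. So the proposition is the classical equivalence ``von Neumann regular $\Leftrightarrow$ reduced of Krull dimension $0$'', and the plan is to prove both implications by hand so that everything stays self-contained and valid with no noetherian hypothesis.

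For the implication ``$f\in(f^2)$ always $\Rightarrow$ $R$ reduced of dimension $0$'' I would argue as follows. Reducedness: if $f^k=0$, then writing $f=f^2g$ and iterating yields $f=f^{m+1}g^m$ for every $m\ge 1$, so taking $m=k$ forces $f=0$. Dimension zero (for $R\neq 0$; the zero ring is the empty product of fields and is trivial): the hypothesis descends to the domain $R/\mathfrak{p}$ for any prime $\mathfrak{p}$, and there the relation $\bar f=\bar f^{2}\bar g$ reads $\bar f(1-\bar f\bar g)=0$, so every nonzero residue class is a unit; hence $R/\mathfrak{p}$ is a field and $\mathfrak{p}$ is maximal, giving $\dim R=0$.

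For the converse, ``$R$ reduced of dimension $0$ $\Rightarrow$ $f\in(f^2)$ always'', I would fix $a\in R$ and show that the ideal $J:=\Ann(a)+(a)$ equals $R$; granting this, writing $1=s+ca$ with $sa=0$ and multiplying through by $a$ gives $a=ca^{2}\in(a^{2})$, as wanted. To see $J=R$, suppose instead $J\subseteq\mathfrak{m}$ for some maximal ideal $\mathfrak{m}$. Then $R_{\mathfrak{m}}$ is still reduced, and since $\dim R=0$ its maximal ideal is its only prime, hence equals its nilradical, hence is zero, so $R_{\mathfrak{m}}$ is a field; but $a\in\mathfrak{m}$ forces the image of $a$ in $R_{\mathfrak{m}}$ to be $0$, i.e. $ta=0$ for some $t\notin\mathfrak{m}$, whence $t\in\Ann(a)\subseteq J\subseteq\mathfrak{m}$, a contradiction.

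Finally, for the noetherian refinement: a finite product of fields is visibly reduced of dimension $0$; conversely, a noetherian ring of dimension $0$ is artinian, hence decomposes as a finite product $\prod_i R_i$ of artinian local rings, reducedness of $R$ forces each factor $R_i$ to be reduced, and an artinian local reduced ring has nilpotent — hence zero — maximal ideal, so each $R_i$ is a field. I expect the one genuinely delicate point to be this converse direction in the non-noetherian generality: with primary decomposition unavailable, the localization argument establishing $\Ann(a)+(a)=R$ is the technical core, and it is essentially the nontrivial half of the von Neumann regular characterization; everything else is routine.
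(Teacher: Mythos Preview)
Your proof is correct and follows essentially the same route as the paper: for the direction ``$f\in(f^2)$ always $\Rightarrow$ reduced of dimension $0$'' both you and the paper pass to $R/\mathfrak p$ to see every prime is maximal, and both obtain reducedness by iterating $f=f^2g$; for the converse, both arguments rest on the observation that each localization $R_{\mathfrak m}$ is a reduced local ring of dimension $0$, hence a field. The only differences are cosmetic: the paper simply says ``check locally'' for the converse, whereas you make the local-to-global step explicit via the clean identity $\Ann(a)+(a)=R$ (a nice touch, and slightly more self-contained); you also frame the condition as von Neumann regularity, which the paper does not mention, and you invoke the artinian structure theorem for the noetherian addendum where the paper just cites the Chinese Remainder Theorem---these are equivalent packagings of the same fact.
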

\begin{proof}
Assume that $R$ is a reduced zero-dimensional ring. Then it is enough to check the desired property locally. But, on that case $R$ 
is a ring with a unique prime ideal, which is at the same time reduced, therefore it is a field and in particular $f\in (f^2)$, for all 
$f\in R$.

For the other direction, let $P$ be a prime ideal of $R$. Clearly, the same property holds for $R/P$, thus, for any $g\in R/P$ there 
exists $c\in R/P$, such that $g=cg^2$. Therefore, $g(1-cg)=0$, implying that either $g=0$ or $1-cg=0$, because $R$ is an integral domain.
So $R/P$ is a field and then $R$ has dimension zero. Finally, from the hypothesis follows that for any $f\in R$, $f\in (f^{2^m})$, for 
every natural number $m$. In particular, if $f$ is nilpotent and $f^m=0$, for some $m\in\mathbb{N}$, then $f\in (f^m=(0))$. In conclusion,
$f$ is reduced. 
The second part is a direct consequence of the Chinesse Remainder Theorem.
 \end{proof}
 \begin{remark}
 The previous proposition guarantees the existence of non-reduced forcing algebras over any noetherian ring which is not a finite direct product of
  fields. Specifically, as before, we choose an element $f\in R$, such that $f\notin (f^2)$ and define $A:=R/(f^2)$.
\end{remark}
Finally, we present a more interesting example of an irreducible but not reduced forcing algebra over an affine domain base ring $R$ such that the 
$\codim((f_1,...,f_n),R)$ is arbitrary large.

\begin{example}
 Consider $R=k[x_1,...,x_{n-1},z]/(x_1z,...,x_{n+1}z)$, $h=x_1T_1+\cdots+x_{n+1}T_{n+1}+z^2$ and $A=R[T_1,...,T_{n+1})/(h)$.
 Then, 
 \[\codim((x_1,...,x_{n+1}),R)=n,\]
 because the ring of polynomials is catenary. Besides, it is straightforward to verify that 
 $z\notin (h)$, and $z^3=z^2h\in(h)$. Therefore $A$ is non-reduced.
 \end{example}

\section{Integrity over UFD}
Now, we prove an integrity criterion
 for forcing algebras over UFD as base ring involving just the height of the forcing elements $f_1,...,f_n$. 

\begin{lemma} \label{domain} Let $R$ be a Noetherian UFD which is not a field, $J=(f_1,\ldots,f_n,f)$, 
where some $f_i\neq0$, and  let $A$ be the forcing algebra corresponding to this data and $B=R[T_1,...,T_n]$. Then $A$ is an integral domain if and only if 
$J=R$, or ${\rm ht}J\geq2$.
  \end{lemma}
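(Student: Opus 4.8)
The plan is to prove the two directions of the equivalence separately, using the structural description of the irreducible components of $\Spec A$ recalled in the introduction, together with Theorem \ref{irreducibility} for the main direction.

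First I would dispose of the easy implications. If $J = R$, then the forcing equation $h = f_1T_1 + \cdots + f_nT_n + f$ has a unit among its coefficients $f_1, \ldots, f_n, f$; using the observation from the introduction that $A_{f_i} \cong R_{f_i}[T_1, \ldots, \check{T_i}, \ldots, T_n]$ (and, in the case where $f$ itself is a unit, a direct linear change of variables), one sees that $A$ is isomorphic to a polynomial ring over $R$, hence a domain since $R$ is. If instead ${\rm ht}\, J \geq 2$, then Theorem \ref{irreducibility} already gives that $A$ is irreducible, so it only remains to check $A$ is reduced; but $h$ is a degree-one polynomial in $B = R[T_1,\ldots,T_n]$ whose coefficient ideal $J$ is nonzero, and since $R$ is a UFD the $\gcd$ of the coefficients can be divided out — after dividing, $h$ becomes a primitive linear polynomial, hence irreducible in $B$ (a primitive polynomial of degree one over a UFD is irreducible), so $(h)$ is a prime ideal and $A = B/(h)$ is a domain. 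Actually this last argument also re-proves irreducibility directly, so I would present it as a single clean computation: $A$ is a domain iff $h$ is prime in $B$ iff, writing $h = d \cdot h'$ with $d = \gcd(f_1,\ldots,f_n,f)$ and $h'$ primitive, one has $d$ a unit, i.e. ${\rm ht}\, J = 0$ is impossible here and we need $d \in R^\times$.

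For the converse — assuming $A$ is a domain, show $J = R$ or ${\rm ht}\, J \geq 2$ — I would argue by contraposition. Suppose $J \neq R$ and ${\rm ht}\, J \leq 1$; since $R$ is a Noetherian UFD and $J \neq 0$ (some $f_i \neq 0$), ${\rm ht}\, J \leq 1$ forces ${\rm ht}\, J = 1$, and then $J$ is contained in a height-one prime, which in a UFD is principal, say $J \subseteq (d)$ with $d$ a nonunit. Write $f_i = d g_i$ and $f = d g$. Then $h = d \cdot h'$ where $h' = g_1 T_1 + \cdots + g_n T_n + g$, and $d$ is a nonunit nonzerodivisor in $B$ which is not in $(h')$ (degree reasons: $h'$ has positive $T$-degree since some $g_i \neq 0$, as some $f_i \neq 0$), so $d$ and $h'$ exhibit $h$ as a product of two nonunits, neither dividing the other in the appropriate sense; more precisely $(h) = (d) \cap (h')$ is not prime — equivalently $\bar d \cdot \bar{h'} = 0$ in $A$ with $\bar d, \bar{h'} \neq 0$ — so $A$ is not a domain, contradiction.

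The main obstacle, such as it is, lies in the converse direction: one must be careful that ${\rm ht}\, J = 1$ genuinely produces a principal prime containing $J$ (this is where the UFD hypothesis is essential, not just factoriality of $R$ but that height-one primes are principal) and that the resulting factor $h'$ is genuinely a nonunit in $B$ — i.e. that not all $g_i$ vanish, which follows from the standing hypothesis that some $f_i \neq 0$. One should also handle the degenerate bookkeeping: the hypothesis "$R$ is not a field" is what guarantees nonunit nonzerodivisors $d$ exist, and it is needed so that the case ${\rm ht}\, J = 1$, $J \neq R$ is not vacuous. I would make sure the write-up records explicitly that in a UFD a primitive linear polynomial $f_1 T_1 + \cdots + f_n T_n + f$ (with not all $f_i$ zero and $\gcd(f_1,\ldots,f_n,f) = 1$) is irreducible, since both directions hinge on it; this is a short Gauss-lemma argument but deserves a sentence.
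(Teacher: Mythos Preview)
Your main argument --- the ``single clean computation'' showing that $A$ is a domain iff $h$ is prime in the UFD $B$ iff the content $d=\gcd(f_1,\ldots,f_n,f)$ is a unit iff $J=R$ or ${\rm ht}\,J\geq 2$ --- is correct and is exactly the paper's approach. The paper phrases both directions contrapositively: if ${\rm ht}\,J=1$ it picks a height-one prime $(p)\supseteq J$ and factors $h=p\cdot(\text{linear})$; conversely, if $h=Q_1Q_2$ is a nontrivial factorization, a degree count forces one factor into $R$, so $J$ sits inside a proper principal ideal and ${\rm ht}\,J\leq 1$ by Krull. Your gcd formulation is the same argument with slightly different bookkeeping.

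There is one genuine slip in your preliminary discussion that you should remove: the claim ``if $J=R$, then $h$ has a unit among its coefficients $f_1,\ldots,f_n,f$'' is false. For instance in $R=k[x]$ take $f_1=x$, $f_2=1-x$, $f=0$: then $J=(x,1-x)=R$ but neither coefficient is a unit. So the attempt to read off $A\cong R[T_1,\ldots,\check T_i,\ldots,T_n]$ from a unit coefficient does not go through. Since you already note that the gcd argument handles this case uniformly (when $J=R$ the gcd is automatically a unit, hence $h$ is primitive linear, hence prime), just drop the faulty paragraph and lead with the content/Gauss argument. Also, your remark that ``$R$ not a field'' is needed so that the bad case is nonvacuous is slightly off: if $R$ were a field the hypothesis $f_i\neq 0$ forces $J=R$ and the statement is trivially true; the paper's hypothesis merely excludes this triviality and is not used in the proof proper.
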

\begin{proof}
Along the proof we will use the basic fact that in a UFD the notions of prime and irreducible element coincide. We will prove the 
negation of the equivalence $((h)\in \Spec B) \Leftrightarrow (I=R \vee{\rm ht}J\geq2$), which is equivalent formally to 
 $((h)\notin \Spec B) \Leftrightarrow (I\neq R \wedge{\rm ht}J\leq1$). Now, we can written the condition at the right side by 
 ${\rm ht}J\leq1$, assuming implicitly that ${\rm ht} I$ is well defined, i.e., $I\neq R$. So we will see that $A$
is not an integral domain if and only if ${\rm ht}J\leq1$. In fact, we can assume $J\neq0$ and therefore ${\rm ht}J=1$. Choose a prime ideal $P$ of 
$R$ such that $P$ contains $J$ and ${\rm ht}P=1$. Choose $a\neq0\in P$. Now, some of the prime factors of $a$, say $p$, belongs to $P$ and therefore $P=(p)$, due to the fact that both 
prime ideals have height one. Thus, there exist $g_i,g\in R$ such that $f_i=pg_i$ and $f=pg$, hence 
$h=f_1T_1+ \ldots +f_nT_n+f=p(g_1T_1+\ldots +g_nT_n+g)$ is the product of $p$ and an element which is not a unit since some of the $f_i$ 
is different from zero. Therefore $h$ is not irreducible, which is equivalent of being a non prime element. In conclusion, $A$ is not an 
integral domain. 

Conversely, assume that $A$ is not an integral domain, or equivalently that $h=f_1T_1+ \ldots +f_nT_n+f$ is not irreducible. 
Hence, there exists polynomials $Q_1,Q_2\in R[T_1,\ldots ,T_n]$, not units, such that $h=Q_1Q_2$. Now, the degree of $h$ is the sum of 
the degrees of $Q_1$ and $Q_2$, because $R$ is an integral domain. Then one of the two factors has degree zero, say $Q_1$. Comparing the 
coefficients we get that each $f_i=Q_1g_i$ and $f=Q_1g$, and $Q_2=g_1T_1+\ldots+g_nT_n+g$. In conclusion, $J\subseteq(Q_1)R$ and 
therefore by Krull's Theorem ${\rm ht}(J)\leq1$.
\end{proof}

\section{A Normality Criterion for Polynomials over a Perfect Field}

Now we will try to understand under what conditions on the elements $f_1,\ldots ,f_n,f\in R$ the associated forcing algebra is a 
normal domain in the case that $R$ is the ring of polynomials over a perfect field. 
For some examples, results and intuition we assume a very basic and modest knowledge of algebraic 
 geometry, mainly relating affine varieties (see, for example \cite{goertzwedhornalgebraic} and \cite[Chapter I]{hartshornealgebraic}).

 We state explicitly the statement of a corollary of the Jacobian Criterion, which we use later. 
 For proofs see \cite[Theorem 16.19, Corollary 16.20]{eisenbud}.

\begin{corollary}\label{corjacobian}
Let $R[x_1,\ldots,x_r]/I$ be an affine ring over a perfect field $k$ and suppose that $I$ has pure codimension $c$, i.e., the height 
of any minimal prime over $I$ is exactly $c$.
 Suppose that $I=(f_1,\ldots,f_n)$. If $J$ is the ideal of $R$ generated by the $c\times c$ minors of the Jacobian matrix 
$(\partial f_i/\partial x_j)$, then $J$ defines the singular locus of $R$ in the sense that a prime $P$ of $R$  contains $J$  
if and only if $R_P$ is not a regular local ring.

\end{corollary}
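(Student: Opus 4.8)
The plan is to deduce the statement from the Jacobian criterion as given in \cite[Theorem 16.19]{eisenbud} (which is precisely how \cite[Corollary 16.20]{eisenbud} is obtained): read off the regular locus from the rank of a Jacobian matrix, then translate that rank condition into membership in the ideal of minors. Write $S=k[x_1,\ldots,x_r]$ so that $R=S/I$, and for a prime $P\in\Spec R$ let $\mathfrak{p}\subseteq S$ be its preimage; thus $\mathfrak{p}\supseteq I$ and $R_P=S_{\mathfrak{p}}/IS_{\mathfrak{p}}$. Put $J(f):=(\partial f_i/\partial x_j)$ and write $\overline{J(f)}$ for its reduction modulo $\mathfrak{p}$, an $n\times r$ matrix over $\kappa(\mathfrak{p})=\operatorname{Frac}(S/\mathfrak{p})$. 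Since the $c\times c$ minors of $J(f)$ are the elements of $S$ whose images generate $J\subseteq R$, there is an elementary reformulation: $P\supseteq J$ if and only if every $c\times c$ minor of $J(f)$ lies in $\mathfrak{p}$, i.e. if and only if $\operatorname{rank}\overline{J(f)}<c$.

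First I would establish the dimension identity $\dim R_P=\operatorname{ht}_S(\mathfrak{p})-c$. This is where the pure-codimension hypothesis enters: $S$ is a catenary domain of finite type over a field, so for every minimal prime $\mathfrak{q}$ of $I$ contained in $\mathfrak{p}$ one has $\operatorname{ht}_S(\mathfrak{p})=\operatorname{ht}_S(\mathfrak{q})+\operatorname{ht}_{S/\mathfrak{q}}(\mathfrak{p}/\mathfrak{q})$ with $\operatorname{ht}_S(\mathfrak{q})=c$; minimizing over such $\mathfrak{q}$ gives $\dim R_P=\operatorname{ht}_R(P)=\operatorname{ht}_S(\mathfrak{p})-c$. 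I would also record $\operatorname{trdeg}_k\kappa(\mathfrak{p})=\dim(S/\mathfrak{p})=r-\operatorname{ht}_S(\mathfrak{p})$.

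Next I would invoke the Jacobian criterion over the perfect field $k$ in its differential form: for the localization $R_P$ of the finitely generated $k$-algebra $R$,
\[ \dim_{\kappa(\mathfrak{p})}\!\big(\Omega_{R/k}\otimes_R\kappa(\mathfrak{p})\big)\ \geq\ \dim R_P+\operatorname{trdeg}_k\kappa(\mathfrak{p}), \]
with equality precisely when $R_P$ is a regular local ring (\cite[Theorem 16.19]{eisenbud}). On the other hand, the conormal exact sequence $I/I^2\to\Omega_{S/k}\otimes_S R\to\Omega_{R/k}\to0$, tensored over $R$ with $\kappa(\mathfrak{p})$, identifies $\Omega_{S/k}\otimes_S\kappa(\mathfrak{p})$ with $\kappa(\mathfrak{p})^{\,r}$ on the basis $dx_1,\ldots,dx_r$ and sends the class of $f_i$ to $\sum_{j=1}^{r}\overline{\partial f_i/\partial x_j}\,dx_j$; since $\Omega_{R/k}\otimes_R\kappa(\mathfrak{p})$ is the cokernel, $\dim_{\kappa(\mathfrak{p})}(\Omega_{R/k}\otimes_R\kappa(\mathfrak{p}))=r-\operatorname{rank}\overline{J(f)}$. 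Feeding this and the two formulas of the previous paragraph into the inequality, it becomes $\operatorname{rank}\overline{J(f)}\leq c$, with equality if and only if $R_P$ is regular.

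Finally I would combine the pieces: $R_P$ is \emph{not} regular exactly when $\operatorname{rank}\overline{J(f)}<c$, which by the reformulation in the first paragraph is exactly the condition $P\supseteq J$; this is the assertion. The one genuinely load-bearing point is getting the rank bound to be precisely $c$, so that a \emph{single} ideal of $c\times c$ minors works for the whole singular locus: this needs the perfectness of $k$, which is what makes the equality case of the $\Omega$-inequality coincide with plain regularity, together with the pure-codimension hypothesis, which is what makes $\dim R_P=\operatorname{ht}_S(\mathfrak{p})-c$ hold uniformly in $P$. Without purity the relevant bound would change from component to component and the size of the minors would have to be adjusted locally.
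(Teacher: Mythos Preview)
The paper does not supply its own proof of this corollary; it merely states the result and refers the reader to \cite[Theorem 16.19, Corollary 16.20]{eisenbud}. Your proposal is correct and is precisely the deduction of \cite[Corollary 16.20]{eisenbud} from \cite[Theorem 16.19]{eisenbud}: compute $\dim_{\kappa(\mathfrak{p})}(\Omega_{R/k}\otimes\kappa(\mathfrak{p}))=r-\operatorname{rank}\overline{J(f)}$ via the conormal sequence, use catenarity and pure codimension to get $\dim R_P+\operatorname{trdeg}_k\kappa(\mathfrak{p})=r-c$ independently of $P$, and then read off that the rank is $\leq c$ with equality exactly when $R_P$ is regular, the perfectness of $k$ guaranteeing that the equality case of the $\Omega$-inequality characterizes regularity at every prime.
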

Besides, let us recall Serre's Criterion for normality for any Noetherian ring (see \cite[Theorem 11.2.]{eisenbud}). Remember that a 
ring is normal if it is the direct product of normal domains:

\begin{theorem}
 A Noetherian ring $S$ is normal if and only if the following two conditions holds:
\begin{enumerate}
 \item (S2) For any prime ideal $P$ of $S$ holds \[ {\rm depth}_P(S_P)\geq {\rm min}(2,{\rm dim}(S_P)).  \]
\item (R1) Every localization of $S$ on primes of codimension at most one is a regular ring.
\end{enumerate}

\end{theorem}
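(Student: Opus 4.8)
The plan is to prove both implications of Serre's criterion by reducing, in each direction, to a Noetherian local domain and pinning down exactly where (R1) and (S2) are used; this is the classical argument (cf.\ the proof in \cite[\S 11]{eisenbud}), so I only sketch the steps.

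\textbf{From normality to (R1) and (S2).} Since ``normal'' means a finite direct product of normal domains and the primes of $S=S_1\times\cdots\times S_r$ are precisely the primes of the factors, both conditions for $S$ are equivalent to the corresponding conditions for each $S_i$; hence I may assume $S$ is a Noetherian normal domain. For (R1): a prime $P$ with ${\rm ht}(P)\le 1$ gives a normal Noetherian local domain $S_P$ of dimension $\le 1$, and such a ring is a field (dimension $0$) or a discrete valuation ring (dimension $1$), hence regular in either case. The dimension-one case I would obtain by the determinant trick: writing $\mathfrak{m}=PS_P$ and choosing $0\ne a\in\mathfrak{m}$, one has $\mathfrak{m}^n\subseteq(a)$ for some minimal $n$, and for $b\in\mathfrak{m}^{n-1}\setminus(a)$ the element $z=b/a\notin S_P$ satisfies $z\mathfrak{m}\subseteq S_P$; either $z\mathfrak{m}\subseteq\mathfrak{m}$, so $z$ is integral over $S_P$ hence in $S_P$ (contradiction), or $z\mathfrak{m}=S_P$, so $\mathfrak{m}$ is principal and $S_P$ is a DVR. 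For (S2): heights $0$ and $1$ are covered (a field has depth $0$, a DVR depth $1$), and for ${\rm ht}(P)\ge 2$ the same trick applied to $S_P$ shows that $\mathfrak{m}=PS_P$ cannot be associated to $S_P/xS_P$ for $0\ne x\in\mathfrak{m}$: otherwise $\mathfrak{m}y\subseteq xS_P$ for some $y\notin xS_P$, so $z=y/x\notin S_P$ has $z\mathfrak{m}\subseteq S_P$, forcing either $z$ integral over $S_P$ (impossible) or $\mathfrak{m}$ principal, which by Krull's height theorem contradicts $\dim S_P\ge 2$. Hence $x$ extends to a regular sequence of length two and $\operatorname{depth}(S_P)\ge 2$.

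\textbf{From (R1) and (S2) to normality.} Here (R1) contains (R0) (localizations at minimal primes are regular of dimension $0$, i.e.\ fields) and (S2) implies (S1) (no embedded primes), so $S$ is reduced; let $\mathfrak{p}_1,\dots,\mathfrak{p}_r$ be its minimal primes and embed $S$ in its total quotient ring $K\cong\prod_i\operatorname{Frac}(S/\mathfrak{p}_i)$. I would show $S$ is integrally closed in $K$; granting this, the idempotents of $K$ are integral over $S$, hence lie in $S$, so $S$ splits as a finite product of domains, each of which is integrally closed in its own fraction field and therefore a normal domain. To prove integral closedness, let $t\in K$ be integral over $S$ and set $\mathfrak{a}=(S:_S t)$; writing $t=u/v$ with $v$ a nonzerodivisor gives $v\in\mathfrak{a}$, so $\mathfrak{a}$ lies in no minimal prime. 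For a height-one prime $Q$, $S_Q$ is regular of dimension $\le 1$ by (R1), hence a normal domain, so $t\in S_Q$ and $\mathfrak{a}\not\subseteq Q$. Finally, suppose $P\in\Ass(S/\mathfrak{a})$ with ${\rm ht}(P)\ge 2$; passing to $S_P$ (in which $v$ is still a nonzerodivisor, since $S_P$ is reduced and $v$ avoids its minimal primes) and clearing denominators in the integral equation of $t$ gives $u^n\in vS_P$, so $\bar u$ is a nonzero nilpotent of $S_P/vS_P$ (nonzero because $\mathfrak{a}_P$ is proper), and from $\mathfrak{a}_P=(vS_P:_{S_P}u)$ one gets $S_P/\mathfrak{a}_P\cong\bar u\cdot(S_P/vS_P)$, a submodule of $S_P/vS_P$; thus $PS_P\in\Ass(S_P/vS_P)$. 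But $\operatorname{depth}(S_P)\ge 2$ by (S2), so $\operatorname{depth}(S_P/vS_P)\ge 1$ and $PS_P\notin\Ass(S_P/vS_P)$ --- a contradiction. Hence $\Ass(S/\mathfrak{a})=\emptyset$, so $\mathfrak{a}=S$ and $t\in S$.

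\textbf{Main obstacle.} The substantive point is the final step of the converse: ruling out associated primes of $S/\mathfrak{a}$ of height $\ge 2$. This is where (S2) is genuinely used, and it rests on identifying $S_P/\mathfrak{a}_P$ with the cyclic submodule of $S_P/vS_P$ generated by the nilpotent $\bar u$. The remaining ingredients --- the reductions to the domain and local cases, the DVR characterization in dimension one, the deduction of reducedness from (R0) and (S1), and the idempotent splitting --- are routine, though some care is required in the bookkeeping of total quotient rings under localization.
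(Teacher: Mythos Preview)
The paper does not give its own proof of this theorem: it is stated as background and attributed to \cite[Theorem 11.2]{eisenbud}. Your sketch is correct and is essentially the classical argument found in that reference, so there is nothing in the paper to compare it against.
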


 \begin{remark}\label{derivatives}
 If $R=k[x_1,\ldots ,x_r]$ and $h=f_1T_1 + \ldots + f_nT_n+f\in B:=R[T_1,\ldots ,T_n]$, $h\neq0$, then the forcing algebra 
 $A=R[T_1,\ldots ,T_n]/(h)$ is equidimensional of dimension ${\rm dim}A=r+n-{\rm ht}((h))=r+n-1$, since $=R[T_1,\ldots ,T_n]$ is catenary and $h$ has pure codimension 
 one, because every minimal prime over $(h)$ has height one by Krull's principal ideal 
 theorem. Therefore in the case that $k$ is a perfect field we deduce from the corollary of the Jacobian criterion that the singular 
 locus of the forcing algebra is exactly the prime spectrum of the following ring 
 \[ A_S=A/((\partial h/\partial x_j),(\partial h/ \partial T_i))=R[T_1,\ldots ,T_n]/(h,(\partial h/\partial x_j),(\partial h/ \partial T_i)).\]
Now, $(\partial h/\partial x_j)=\sum_{i=1}^n(\partial f_i/\partial x_j)T_i +(\partial f/\partial x_j)$ and
$\partial h/ \partial T_i=f_i$. Thus we get \[ J:=(h,(\partial h/\partial x_j),(\partial h/ \partial T_i))=(h,\sum_{i=1}^n(\partial f_i/\partial x_j)T_i+(\partial f/\partial x_j),f_i) \] 
\[=(f,f_i,\sum_{i=1}^n(\partial f_i/\partial x_j)T_i +(\partial f/\partial x_j)),\]
where $i,j\in \{1,...,n\}$. We can write the last set of generators in a compact way using matrices:

\[\left( \begin{array}{ccc}\partial f_1/\partial x_1&\ldots& \partial f_n/\partial x_1\\ \vdots&&\vdots \\ \partial f_1/\partial x_r &\ldots&\partial f_n/\partial x_r\end{array}\right)\cdot\left( \begin{array}{c}T_1\\ \vdots \\T_n\end{array}\right)+\left( \begin{array}{c}\partial f/\partial x_1 \\ \vdots \\ \partial f/\partial x_r \end{array}\right).\]
We will denote by $\overline{J}$ the class of $J$ in $A$.

\end{remark}
Now we rewrite the normality condition for the forcing algebra $A$ in terms of the codimension of its singular locus $V(\overline{J})\in \Spec A$, or 
in terms of the codimension of the corresponding closed subset $V(J)\subseteq \Spec (R[T_1,\ldots ,T_n])$, which are isomorphic as affine schemes.
On this section we set 
\[I=(f,f_1,...,f_n)\in R\]
and $D=(\partial f/\partial x_i, \partial f_j/\partial x_i)$ for $i,j\in \{1,...,n\}$.
 Note that $J\subseteq (I+D)B$. In particular. $V(IB)\cap V(DB)\subseteq V(J)\subseteq\Spec B$.

First, let's consider the trivial case $R=k$. By previous comments we know that if $A\neq0$ then $A=k[T_1,...,\check{T_i},...,T_n]$,
so $A$ is regular and thus a normal domain. In conclusion, $A$ is a normal domain if and only if all $f_i$ and $f$ are zero, or 
there exists some $f_i\neq0$.

\begin{lemma} \label{sing}
 let $R=k[x_1,\ldots ,x_r]$ be the ring of polynomials over a perfect field $k$ and $h=f_1T_1 + \ldots + f_nT_n+f\in B:=R[T_1,\ldots ,T_n]$, with $h\neq0$,
 and $A=R[T_1,\ldots ,T_n]/(h)$.
 Then, the following conditions are equivalent:
\begin{enumerate}
 \item $A$ is a normal ring.
\item ${\rm codim}(\overline{J},A)\geq2$, or  $\overline{J}=A$.
\item ${\rm codim}(J,B)\geq3$, or $J=B$.
\end{enumerate}
\end{lemma}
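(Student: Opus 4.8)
The plan is to prove the chain of equivalences $(1)\Leftrightarrow(2)\Leftrightarrow(3)$, with the equivalence $(2)\Leftrightarrow(3)$ being essentially a dimension bookkeeping step and $(1)\Leftrightarrow(2)$ being the substantive content, which comes from Serre's criterion together with the Jacobian description of the singular locus recorded in Remark \ref{derivatives}.

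First I would dispose of $(2)\Leftrightarrow(3)$. Since $V(\overline J)\subseteq\Spec A$ and $V(J)\subseteq\Spec B$ are isomorphic as affine schemes (as noted right before the lemma, $V(J)\subseteq V((h))\cong\Spec A$), the case $\overline J=A$ corresponds exactly to $J=B$. Otherwise, using that $A=B/(h)$ is equidimensional of dimension $r+n-1$ (Remark \ref{derivatives}) and that $B=R[T_1,\ldots,T_n]$ is catenary of dimension $r+n$, the codimension of an irreducible closed subset $Z\subseteq V((h))$ computed inside $A$ and the codimension of the same $Z$ computed inside $B$ differ exactly by $\mathrm{ht}((h))=1$; hence $\mathrm{codim}(\overline J,A)\geq 2$ iff $\mathrm{codim}(J,B)\geq 3$. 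One should be slightly careful to run this at the level of each minimal prime of $J$ (equivalently, to use that $h$ is part of a system of parameters locally), but this is routine.

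The core is $(1)\Leftrightarrow(2)$. Since $R$ is a polynomial ring over a perfect field, $A=B/(h)$ is an affine algebra over a perfect field, hence excellent, so Serre's criterion $(R1)+(S2)$ applies. The key point is that $A$ is a complete intersection in $B$ (cut out by the single nonzerodivisor $h$, as $B$ is a domain and $h\neq 0$), so $A$ is Cohen--Macaulay; therefore $(S2)$ holds automatically and $A$ is normal iff $(R1)$ holds, i.e. iff $A_P$ is regular for every prime $P$ of codimension $\leq 1$. By Remark \ref{derivatives}, the non-regular locus of $A$ is exactly $V(\overline J)$. Thus $(R1)$ says precisely that $V(\overline J)$ contains no prime of codimension $\leq 1$, equivalently $\mathrm{codim}(\overline J,A)\geq 2$, or $V(\overline J)=\emptyset$, i.e. $\overline J=A$. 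This is exactly $(2)$. For the reverse implication one also notes that when $A$ is a normal ring it is automatically a finite product of normal domains, but since $A=B/(h)$ with $B$ a domain and $h$ irreducible-or-not, $A$ is in fact connected; in any case normality as a ring is what Serre's criterion delivers, so no separate irreducibility argument is needed here.

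The main obstacle is making sure the Cohen--Macaulayness (hence $(S2)$) and the equidimensionality are invoked correctly: one must use that $h\neq 0$ in the domain $B$ so that $h$ is a nonzerodivisor and $(h)$ has pure height one, which gives both that $A$ is equidimensional of dimension $r+n-1$ and that $A$ is Cohen--Macaulay (a hypersurface in the regular ring $B$). Once that is in place, the Jacobian criterion in the form of Corollary \ref{corjacobian}, already applied in Remark \ref{derivatives} to identify the singular locus with $V(\overline J)$, does the rest, and the equivalence with condition $(3)$ is just the codimension shift by one between $B$ and $A=B/(h)$.
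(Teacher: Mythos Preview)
Your proposal is correct and follows essentially the same approach as the paper: for $(1)\Leftrightarrow(2)$ you use Cohen--Macaulayness of the hypersurface $A$ to get $(S2)$ for free and then identify $(R1)$ with the codimension condition on the Jacobian locus via Remark~\ref{derivatives}, exactly as the paper does; for $(2)\Leftrightarrow(3)$ the paper spells out explicit chains of primes in $B$ and $A$ using Krull's Principal Ideal Theorem and catenarity, whereas you phrase this as a codimension-shift-by-one argument using equidimensionality of $A$, but this is the same content.
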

\begin{proof}
 $(1)\Rightarrow(2)$ Assume that $A$ is a normal ring, then the Serre's Criterion tells us that for any prime ideal $q$ of $A$ with 
 ${\rm ht}q\leq1$, $A_q$ is a regular ring (remember that in dimension zero regularity is equivalent to being a field). Now, suppose that $\overline{J}\subsetneq A$. Then, we know that for any prime $P$ of $A$ that 
 contains $\overline{J}$, $A_P$ is not regular, therefore ${\rm ht}P\geq2$, thus ${\rm codim}(\overline{J},A)\geq2$. 

$(2)\Rightarrow(1)$ We know that $A$ is C-M because it is the quotient of C-M $R[T_1,\ldots ,T_n]$ 
by an ideal $(h)$ of height one generated by exactly one element, (see \cite[Theorem 18.13]{eisenbud}). 
Therefore, for any prime ideal $P$ of $A$ the local ring $A_P$ is C-M. Then,
\[ {\rm depth}(A_P)={\rm dim}(A_P)\geq {\rm min}(2,{\rm dim}(A_P)). \]
Thus $A$ satisfies the condition (S2) of the Serre's Criterion. Besides, $A$ satisfies condition (R1). In fact, any prime ideal $P$ 
of $A$ of height at most 1 does not contain $\overline{J}$, because ${\rm codim}(\overline{J},A)={\rm ht}_A(\overline{J})\geq2$, or $\overline{J}=A$, hence $P$ is not 
in the singular locus of $A$, that means the regularity of the local ring $A_P$. 

Since, $\overline{J}=A$ if and only if $J=B$ then, for the equivalence between (2) and (3) we can assume that $\overline{J}\subsetneq A$ 
(respectively $J\subsetneq B$).

$(2)\Rightarrow(3).$ Let $P$ be a prime ideal of $B$ that contains $J$, then by hypothesis ${\rm ht}_A(\overline{P})\geq2$. 
Let $\overline{P_0}\subsetneqq \overline{P_1} \subsetneqq \overline{P_2}=\overline{P}$ be a chain of primes in $A$, then one can see the corresponding chain of 
prime ideals in $B$ adding the zero ideal, which is a prime ideal, 
$Q_0=(0)\subsetneqq Q_1=P_0\subsetneqq Q_2=P_1 \subsetneqq Q_3=P_2=P$, that means ${\rm codim}(J,B)\geq3$ 

$(3)\Rightarrow(2)$ Let $P$ be a prime ideal  of $A$ that contains $\overline{J}$, and let $Q$ be the prime ideal of $B$ 
that correspond to $P$. Clearly, $J\subseteq Q$ as subsets of $B$. We know that ${\rm ht}(Q)\geq3$ and $(h)\subseteq Q$, 
therefore $Q$ contains a minimal prime ideal of $(h)$, say $Q_0$, 
which has height one by Krull's Principal Ideal Theorem.
In virtue of this, we know that there exists a saturated chain of primes ideals 
of $B$, \[ Q_0=(0)\subsetneqq Q_1 \subsetneqq Q_2 \subsetneqq Q_3 \subseteq Q,\] since $B$ is a 
catenary ring and ${\rm ht}Q\geq3$, and therefore any saturated chain of prime ideals from $(0)$ to $Q$ has the same length, 
that is, ${\rm ht}(Q)$, which is a least three. Therefore, looking at the corresponding chain in $A$, and denoting by $P_{i-1}$ the prime 
ideal of $A$ corresponding to $Q_i$, we get, starting with the class of $Q_1$, the following saturated chain: $P_0 \subsetneqq P_1 \subsetneqq P_2 \subseteq P$, then ${\rm ht}P\geq2$. In conclusion, 
${\rm codim}(\overline{J},A)\geq2$.
\end{proof}
\begin{remark}\label{codimension}
 An important fact is that for $R=k[x_1,\ldots ,x_r]$, $I$ an ideal of $R$ and $B=R[T_1,...,T_n]$ we know that the ${\rm codim}(I,R)={\rm codim}(IB,B)$, 
because by previous results we get
\[n+r-\codim(IB,B)={\rm dim}(B/IB)={\rm dim}((R/I)[T_1,\ldots,T_n])=\] \[{\rm dim}(R/I)+n={\rm dim}R-{\rm codim}(I,R)+n=n+r-{\rm codim}(I,R).\]

\end{remark}
We want to find necessary and sufficient conditions for the forcing data $f_1,...,f_n$ and $f$ on the base ring of polynomials
$R=k[x_1,...,x_n]$, such that the associated forcing algebra turns out to be a normal domain. The previous lemma gives a condition over $A$ 
and the Jacobian ideal $J$ of the partial derivatives of the forcing equation, which involves, as seen before, again the forcing ideal and 
new forcing equations defined by the partial derivatives of the original forcing data. This suggests that a suitable condition for normality 
over the base $R$ should involve the forcing data and its partial derivatives. The following collection of examples start to give us 
a good first intuition of the phenomenon.
\begin{example}\label{normalintuition}
 Let $k$ be a perfect field and let's define $R=k[x,y]$; $B=k[x,y,T_1,T_2]$; $A=B/(h)$ and
 \[h=x^aT_1+y^bT_2+x^cy^d,\] where $a,b,c$ and $d$ are 
 nonnegative integers. After computations we have that the Jacobian ideal
 \[J=(x^a,y^b,x^cy^d,ax^{a-1}T_1+cx^{c-1}y^d,by^{b-1}T_2+dx^cy^{d-1}).\]
 Let $D\subseteq R$ be the ideal generated by all the partial derivatives of the generators of the forcing ideal 
 $I=(f_1,f_2,f)=(x^a,y^b,x^cy^d)$,
 i.e., \[D=(ax^{a-1},by^{b-1},cx^{c-1}y^d,dx^cy^{d-1}).\] By Lemma \ref{domain}, $A$ is a domain for any nonnegative values of the exponents.
 
 After elementary considerations we see that ${\rm codim}(J,B)\geq3$ or $J=B$ if and only if some of the following seven cases occur:
 
 i) $a=0$.
 
 ii) $a=1$.
 
 iii) $b=0$.
 
 iv) $b=1$.
 
 v) $d=c=0$.
 
 vi) $c=1$ and $d=0$.
 
 vii) $c=0$ and $d=1$.
 
 In fact, in any other case $J\subseteq (x,y)B$, and 
 therefore ${\rm codim}(J,B)\leq 2$. Moreover, it is also elementary to see that the previous seven cases are exactly the ones
 in which the ideal $I+D$ is equal to $R$.
 
 In conclusion, in virtue of the previous Lemma, $A$ is a normal domain if and only if $I+D=R$.
\end{example}
\begin{remark}
Suppose that $k$ is an algebraically closed field. Continuing with the notation of the former example, let's write
$V=V(I)\subseteq k^2$, $W=V(D)\subseteq k^2$, $Y=V(h)\subseteq k^4$ and $S=V(J)\subseteq k^4$ denote the corresponding affine varieties and
 $\pi:S\rightarrow V$ the natural projection to the first two coordinates. Geometrically, 
Example \ref{normalintuition} suggests that the normality of the variety $X$ (which is equivalent to the normality of the forcing algebra,
see \cite[Exercise I.3.17]{hartshornealgebraic}), is related to the intersection of $V$ and $W$, because $V\cap W=\emptyset$, if and only
if $I+D=R$. In fact, this is true for arbitrary polynomial data $f_1,f_2$ and $f\in R$ as we will see. 
  
First, by Lemma \ref{domain}, $A$ is an integral domain if and only if ${\rm ht}I\geq2$ or $I=R$. So, let's assume that $A$ is a domain and
$I\subsetneq R$, otherwise $V=\emptyset$ and $J=B$, being $A$ normal, by Lemma \ref{sing}. Thus, ${\rm ht}I\geq2$, which means that the minimal prime ideals over $I$
are just finitely many maximal ideals, since $\dim R=2$. But, by the Nullstellensatz (see \cite[Exercise 7.14]{atimac}) this points correspond exactly to the points of $V$. Therefore, let's write 
$V=\{v_1,...,v_r \}$. 
   
Moreover, let $S$ be the singular locus of $Y$	 in the sense that, if we consider $S$ as a subvariety of $Y$.
By previous comments $S$ is the finite union of its (singular) fiber varieties $S_{v_i}=\pi^{-1}(v_i)$. 
Now, by Lemma \ref{sing}, $Y$ is a normal variety if and only if ${\rm codim}(S,K^4)\geq3$
 (which is equivalent to ${\rm codim}(S,Y)\geq 2$).
 
 Assume, that $V\cap W\neq\emptyset$, i.e., $I+D\subsetneq R$, and let's prove that $Y$ is not normal. In fact, we know that $J\subseteq (I+D)B$. Therefore, by Remark 
 \ref{codimension} \[\codim (S,k^4)=\codim (J,B)\leq \codim((I+D)B,B)=\codim (I+D,R)\leq2,\]
 implying that $Y$ is not normal.
 
 Conversely, assume that $V\cap W=\emptyset$. Then, for any point $v\in V$, there exists some $\partial f_i(v)/ \partial x_j\neq0$,
  because if not all the partial derivatives of the forcing data would be zero at $v$ (the elements $\partial f(v)/ \partial x_j$ are also zero, 
  because we can write them as a linear combinations of the $\partial f_i(v)/ \partial x_j$, see Remark \ref{derivatives}), 
  implying that $v\in W$, but that is impossible. 
  
  Clearly, $S_{v}=V(G)$, where $v=(a,b)\in k^2$ and 
 \[G=(x-a,y-b,\partial f_1(v)/ \partial xT_1+\partial f_2(v)/ \partial xT_2+\partial f(v)/ \partial x,\]
  
\[\partial f_1(v)/ \partial yT_1+\partial f_2(v)/ \partial yT_2+\partial f(v)/ \partial y).\]
 But, under the condition that some $\partial f_i(v)/ \partial x_j\neq0$, it is elementary to see that $\codim (G,B)\geq3$. In conclusion, 
 $\codim (S_{v}, k^4)\geq3$, implying that $\codim (S,k^4)$, being the minimum of the codimension of its singular fibers, is bigger or equal than three, which means 
 the normality of $Y$. 
 \end{remark}

 Besides, if we move to the next dimension, i.e., $R=k[x_1,x_2,x_3]$ and $B=R[T_1,T_2,T_3]$, then, it is possible to see in a natural way 
 that a necessary condition for the normality of $Y$ is that $(\dim V\cap W)<1$ (here we assume that the dimension of the empty set is $-1$).
 Because, suppose by contradiction that $\dim V\cap W\geq1$. For any point $v\in V\cap W$, by Remark \ref{derivatives} 
 and Lemma \ref{forcingfiber} the fiber $S_v\cong \mathbb{A}^3_k$. Therefore, $(V\cap W)\times \mathbb{A}^3_k\subseteq S$.
 But, $\dim (V\cap W)\times \mathbb{A}^3_k\geq1+3=4$, and so, $\dim S\geq4$, thus, $\codim (S,k^6)\leq2$, implying that $Y$ is not normal. 
 Note that this argument works independent from the number of variables. However, this case was very suitable to obtain the right 
 intuition about the desired condition i.e., $\dim(V\cap W)<r-2$.
 
 Heuristically, one can compute the dimension of $S$ by knowing the general behavior of the dimension of the fibers $S_v$ and the dimension
of the base space $V$. Now, by Lemma \ref{forcingfiber} the fibers $S_v$ have maximal dimension exactly when the rank of the forcing
matrix is minimal, i.e., when the point $v$ belongs to $W\cap V$. Therefore, to guarantee that the dimension of $Y$ is not so big
(in order to maintain the codimension big enough), we need to bound the dimension of the subvariety of $V$ with maximal dimensional singular
fibers, i.e., the dimension of $V\cap W$. In fact, assuming that $Y$ is irreducible, the right necessary and sufficient condition for 
$Y$ being an (irreducible) normal variety is that ($\dim V\leq r-2$ and) $\dim V\cap W\leq r-3$, where $V,W\subseteq k^r$.

First, in order to get a better intuition about the fibers, the following proposition tells us that the points of $\Spec R$ with
fibers completely singular are exactly the points of $V(I)\cap V(D)$.

\begin{proposition}
 Let $R=k[x_1,\ldots ,x_r]$ be the ring of polynomials over a perfect field $k$; $B=R[T_1, \ldots ,T_n]$; 
 $h=f_1T_1+\cdots+f_nT_n+f$; $f,f_1, \ldots , f_n \in R$; $A=B/(h)$; 
 $I=(f,f_1, \ldots , f_n)$; $D=(\partial f/ \partial x_j, \partial f_i/ \partial x_j)$ and 
\[J:=(h,(\partial h/\partial x_j),(\partial h/ \partial T_i)).\]
 Let $\varphi: Y=\Spec A\rightarrow X=\Spec R$ be the forcing morphism.
 Choose a point $x\in Y$ with nonempty fiber $\varphi^{-1}(x)$.
 Then $x\in X$ has fiber completely singular i.e., $\varphi^{-1}(x)\subseteq V(J)\in Y$ if and only if $x\in V(I+D)\subseteq X$.
\end{proposition}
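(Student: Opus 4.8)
The plan is to translate the condition ``$\varphi^{-1}(x)\subseteq V(\overline J)$'' into a statement inside the fibre ring over $x$ and then split into two cases according to whether $x$ lies on $V(I)$. Write $\idealp\in\Spec R$ for the prime corresponding to $x$. By the base-change formula recalled in the introduction, $\varphi^{-1}(x)$ is the scheme-theoretic fibre $\Spec F$, where
\[ F:=\kappa(\idealp)\otimes_R A=\kappa(\idealp)[T_1,\ldots ,T_n]/\big(h(\idealp)\big),\qquad h(\idealp)=\sum_{i=1}^{n} f_i(\idealp)T_i+f(\idealp). \]
Under this identification a point $Q\in\varphi^{-1}(x)$ lies in $V(\overline J)$ exactly when the corresponding prime of $F$ contains the image of $J$ in $F$; hence $\varphi^{-1}(x)\subseteq V(\overline J)$ holds if and only if the image of $J$ in $F$ is contained in every prime of $F$, i.e.\ in $\operatorname{nil}(F)$. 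So everything reduces to computing that image and deciding when it lies inside $\operatorname{nil}(F)$.

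For the computation I would use the description of $J$ from Remark \ref{derivatives}: $J=IB+(\partial h/\partial x_1,\ldots ,\partial h/\partial x_r)$ with $\partial h/\partial x_j=\sum_i(\partial f_i/\partial x_j)T_i+\partial f/\partial x_j$. Consequently the image of $J$ in $F$ is generated by the scalars $f_1(\idealp),\ldots ,f_n(\idealp),f(\idealp)\in\kappa(\idealp)$ together with the linear forms $\overline D_j:=\sum_{i=1}^{n}(\partial f_i/\partial x_j)(\idealp)\,T_i+(\partial f/\partial x_j)(\idealp)$ for $j=1,\ldots ,r$.

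Now the two cases. If $x\notin V(I)$, then since $\varphi^{-1}(x)\neq\emptyset$ the unique surviving generator of $I$ modulo $\idealp$ cannot be $f$ alone (that would make $h(\idealp)$ a nonzero constant, hence a unit, forcing $F=0$); so some $f_i(\idealp)\neq 0$, the image of $I$ --- a fortiori of $J$ --- is the unit ideal of the nonzero ring $F$, and this is not contained in $\operatorname{nil}(F)$, so $\varphi^{-1}(x)\not\subseteq V(\overline J)$. Since $V(I+D)\subseteq V(I)$ we also have $x\notin V(I+D)$, so both sides of the claimed equivalence fail, and we are done in this case. If instead $x\in V(I)$, then $f_i(\idealp)=f(\idealp)=0$, hence $h(\idealp)=0$ and $F=\kappa(\idealp)[T_1,\ldots ,T_n]$ is a domain (so $\operatorname{nil}(F)=0$ and the fibre is automatically nonempty); the image of $J$ in $F$ is then just the ideal $(\overline D_1,\ldots ,\overline D_r)$. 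Because the $T_i$ are indeterminates over the field $\kappa(\idealp)$, each $\overline D_j$ vanishes if and only if all of its coefficients $(\partial f_i/\partial x_j)(\idealp)$ and $(\partial f/\partial x_j)(\idealp)$ vanish; thus the image of $J$ is zero if and only if $D\subseteq\idealp$. So here $\varphi^{-1}(x)\subseteq V(\overline J)$ if and only if $x\in V(D)$, and, combined with the standing hypothesis $x\in V(I)$, this is exactly $x\in V(I)\cap V(D)=V(I+D)$.

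Assembling the two cases gives the proposition. I do not anticipate a genuine obstacle here; the slightly delicate points are the small nonemptiness bookkeeping in the first case (ruling out that the lone nonzero generator of $I$ at $\idealp$ is $f$) and the elementary remark that a linear polynomial over a field is zero precisely when all its coefficients vanish --- everything else is the routine dictionary between the fibre, its spectrum, and the ideal $\overline J$.
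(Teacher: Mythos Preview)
Your proof is correct. The overall strategy---pass to the fibre ring $F=\kappa(\idealp)\otimes_R A$ and analyse the image of $J$ there---is the same as the paper's, but your execution is cleaner and more uniform. The paper handles the implication $x\in V(I+D)\Rightarrow\varphi^{-1}(x)\subseteq V(J)$ in one line via the containment $J\subseteq (I+D)B$, and for the converse it eliminates each generator of $I+D$ separately: it argues that if some $f_i\notin\idealp$ then the fibre is regular (hence disjoint from the singular locus), rules out $f\notin\idealp$ by emptiness of the fibre, and then, once $I\subseteq\idealp$, exhibits explicit witness primes $(T_1,\ldots,T_n)$ and $(T_1,\ldots,T_i-1,\ldots,T_n)$ in $\varphi^{-1}(x)$ lying outside $V(J_x)$ whenever some $\partial f_i/\partial x_j$ or $\partial f/\partial x_j$ survives in $\kappa(\idealp)$. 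Your reformulation---``image of $J$ in $F$ lies in $\operatorname{nil}(F)$''---together with the observation that in the crucial case $F$ is a polynomial ring over a field (so $\operatorname{nil}(F)=0$ and a linear polynomial vanishes iff all its coefficients do) replaces that explicit witness-hunting by a single coefficient-vanishing check. The paper's version has the virtue of being concrete; yours has the virtue of handling both directions simultaneously and avoiding the slight imprecision in the paper's first sub-case, where ``the fibre is regular'' really needs the remark that $A_{f_i}$ is itself a polynomial ring to conclude regularity of $A$ (not just of $F$) at points of the fibre.
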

\begin{proof}

We know from the Corollary of the Jacobian Criterion that for any prime ideal $y\in Y$, $A_y$ is not regular if and only if $y\in V(J)$. 
Let $x\in V(I+D)$ and $Q\in \varphi^{-1}(x)$. Then, $(I+D)B\in Q$ and so $J\in Q$, meaning that $Q\in V(J)$.

Conversely, let's consider a point $x\in X$, such that $\varphi^{-1}(x)\subseteq V(J)$. Now, it is elementary to see that the last condition means 
that $\varphi^{-1}(x)=V(J_x)$, where 
\[\varphi^{-1}(x)= A=k(x)[T_1, \ldots, T_n] /(f_1(x)T_1 + \ldots + f_n(x)T_n+f(x)),\]
and $J_x=(\sum_{i=1}^n(\partial f_i(x)/\partial x_j)T_i +(\partial f(x)/\partial x_j),$
for $i,j\in \{1,...,n\}$. 

Firstly, if $f_i\notin x$, for some $i$, then the fiber $\varphi^{-1}(x)$ is completely regular, because, by previous comments (Ch. 1 \S2) 
$\varphi^{-1}(x)\cong\mathbb{A}^{n-1}_{k(x)}$.

Secondly, if $f\notin x$, then $f(x)=0$. But, we know that $f_1(x)=\cdots=f_n(x)=0$, therefore the fiber is empty, since $h=f(x)\neq0\in k(x)$.
But, it contradicts our hypothesis.
Note that, until now, we know that $h=f_1(x)T_1 + \ldots + f_n(x)T_n+f(x)=0$.

Thirdly, suppose that $\partial f_i/\partial f_j\notin x$, for some $i,j\in \{1,...,n\}$, that means, $\partial f_i(x)/\partial f_j=0$.
We consider two cases: Suppose that 
$\partial f(x)/\partial f_j\neq0$. Then, since $h=0$, the ideal $Q=(T_1,...,T_n)\in \varphi^{-1}(x)$, but 
\[\sum_{i=1}^n(\partial f_i(x)/\partial x_j)T_i +(\partial f(x)/\partial x_j)\notin Q.\]
Therefore $Q\notin V(J_x)$, a contradiction.
In the second case, i.e., $\partial f(x)/\partial f_j=0,$, the prime ideal $Q'=(T_1,...,T_i-1,...,T_n)\in\varphi^{-1}(x)$, but 
\[\sum_{i=1}^n(\partial f_i(x)/\partial x_j)T_i +(\partial f(x)/\partial x_j)=\]
\[\sum_{i=1}^n(\partial f_i(x)/\partial x_j)T_i\notin Q'.\]
So, again, $Q'\notin V(J_x)$, a contradiction.

Lastly, if $\partial f(x)/\partial x_j\neq0$, for some $j$, then, due to the last results 
\[\sum_{i=1}^n(\partial f_i(x)/\partial x_j)T_i +(\partial f(x)/\partial x_j)=\partial f(x)/\partial x_j\in J_x,\]
thus $\varphi^{-1}(x)=V(J_x)=\emptyset$. But, this is not possible, because the fiber is not empty. 

In conclusion, $\varphi^{-1}(x)\subseteq V(J)\in Y$, as desired.
\end{proof}

 Now, we present the statement of the normality criterion for forcing algebras over the ring of polynomials with coefficients in a perfect 
 field.

\begin{theorem}\label{normalcriterion}
 Let $R=k[x_1,\ldots ,x_r]$ be the ring of polynomials over a perfect field $k$; $B=R[T_1, \ldots ,T_n]$; $f,f_1, \ldots , f_n \in R$;
 $I=(f,f_1, \ldots , f_n)$;$D=(\partial f/ \partial x_j, \partial f_i/ \partial x_j)$, for $i,j\in\{1,...,n\}$. Then, the forcing algebra for this data $A$ is 
 a normal domain if and only if the following two conditions hold: 
\begin{enumerate}[(a)]
 \item ${\rm codim}(I,R)\geq2$, or $I=R$.
\item ${\rm codim}(I+D,R)>2$, or $I+D=R$.
\end{enumerate}
Moreover, in the case that all $f_i=0$, then (b) is a necessary and sufficient condition for $A$ being a normal ring.

\end{theorem}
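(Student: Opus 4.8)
The plan is to decompose ``$A$ is a normal domain'' into ``$A$ is an integral domain'' and ``$A$ is a normal ring'' and to match each half against one of the conditions $(a)$, $(b)$ using the results already in hand. As throughout we assume $h\neq 0$, and the case $r=0$ ($R=k$) is the trivial one treated above, so suppose $r\geq 1$. First I would dispose of the case that some $f_i\neq 0$. Then $R$ is a Noetherian UFD which is not a field, so Lemma~\ref{domain} gives that $A$ is a domain if and only if $I=R$ or $\operatorname{ht}I\geq 2$, which is exactly condition $(a)$; and $h\neq 0$, so Lemma~\ref{sing} gives that $A$ is a normal ring if and only if $\codim(J,B)\geq 3$ or $J=B$, where $J=(h,\partial h/\partial x_j,\partial h/\partial T_i)$ is the Jacobian ideal of Remark~\ref{derivatives}. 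Thus being a normal domain is the conjunction of $(a)$ with ``$\codim(J,B)\geq 3$ or $J=B$'', and the whole task reduces to showing, under the standing hypothesis $(a)$, the equivalence of $(b)$ with the Serre condition ``$\codim(J,B)\geq 3$ or $J=B$''.

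The first ingredient I would record is the chain $IB\subseteq J\subseteq (I+D)B$, read straight off the explicit generators of $J$ in Remark~\ref{derivatives}: $f$ and all the $f_i$ lie in $J$, while each generator of $J$ visibly lies in $(I+D)B$. Combined with Remark~\ref{codimension}, this gives the ``only if'' implication at once (without even using $(a)$): if $\codim(J,B)\geq 3$ then $\codim(I+D,R)=\codim((I+D)B,B)\geq\codim(J,B)\geq 3>2$, and if $J=B$ then $(I+D)B=B$, hence $I+D=R$; either way $(b)$ holds.

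The substantial direction is $(a)\wedge(b)\Rightarrow$ ``$\codim(J,B)\geq 3$ or $J=B$'', and here I would argue by bounding the codimension of an arbitrary irreducible component $Z$ of $V(J)\subseteq\Spec B$. Since $IB\subseteq J$, the closure $W:=\overline{\varphi(Z)}$ is an irreducible subvariety of $V(I)$; by the dimension formula for the dominant morphism $Z\to W$ one has $\dim Z=\dim W+\delta$, where $\delta$ is the dimension of the fibre of $Z$ over the generic point $\eta$ of $W$. That fibre lies inside $\varphi^{-1}(\eta)\cap V(J)=V(J_\eta)$, and $\varphi^{-1}(\eta)=\mathbb A^n_{\kappa(\eta)}$ by Lemma~\ref{forcingfiber}, since $f$ and all $f_i$ lie in $\eta$. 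Now two cases: if $W\not\subseteq V(D)$, equivalently $\eta\notin V(I+D)$, then by the Proposition preceding the theorem the fibre over $\eta$ is \emph{not} completely singular, so $V(J_\eta)$ is a proper closed subset of $\mathbb A^n_{\kappa(\eta)}$, whence $\delta\leq n-1$, $\dim Z\leq\dim V(I)+n-1$, and $\codim(Z,B)\geq\codim(I,R)+1\geq 3$ by $(a)$ (and if $I=R$ there is no such $Z$). If instead $W\subseteq V(D)$, then $W\subseteq V(I)\cap V(D)=V(I+D)$, and with the trivial bound $\delta\leq n$ we get $\dim Z\leq\dim V(I+D)+n$, so $\codim(Z,B)\geq\codim(I+D,R)\geq 3$ by $(b)$ (and if $I+D=R$ this case is vacuous). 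Taking the minimum over all components gives $\codim(J,B)\geq 3$, or else $V(J)=\emptyset$, i.e.\ $J=B$. The main obstacle will be the careful bookkeeping with heights in the extension $R\to B=R[T_1,\dots,T_n]$ (to legitimise the fibre-dimension formula and the comparison between $\codim(\cdot,R)$ and $\codim(\cdot,B)$), together with the clean application of the Proposition in the first case to harvest the crucial extra codimension $1$ in the fibre direction.

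Finally, for the ``moreover'' part suppose all $f_i=0$; then $h=f\ (\neq 0)$ and $A=B/(f)=(R/(f))[T_1,\dots,T_n]$, so $A$ is a normal ring if and only if $R/(f)$ is. Here the Jacobian ideal degenerates to $J=(f,\partial f/\partial x_j)B=(I+D)B$ because $\partial h/\partial T_i=0$, so Lemma~\ref{sing} with Remark~\ref{codimension} says directly that $A$ is normal if and only if $\codim(I+D,R)\geq 3$ or $I+D=R$, which is precisely condition $(b)$; note that $(a)$ must genuinely be dropped in this case, since $R/(f)$ can be a normal domain while $\operatorname{ht}I=1$.
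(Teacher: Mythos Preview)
Your proof is correct and the overall architecture matches the paper's: the reduction via Lemma~\ref{domain} and Lemma~\ref{sing}, the easy implication from $J\subseteq (I+D)B$ together with Remark~\ref{codimension}, and the handling of the ``moreover'' clause via $J=(I+D)B$ are all exactly as in the paper.

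Where you genuinely diverge is in the hard implication $(a)\wedge(b)\Rightarrow\codim(J,B)\geq 3$. The paper argues pointwise on a prime $Q\supseteq J$: in the case $(I+D)B\not\subseteq Q$ it picks a partial derivative $\alpha=\partial f_b/\partial x_c\notin Q$, localizes at $\alpha$, and builds an explicit $R_\alpha$-isomorphism $B_\alpha/(l)\cong R_\alpha[T_2,\dots,T_n]$ (eliminating one $T$-variable via the linear generator $l\in J$); it then runs a dimension contradiction using that $B$ is Jacobson to transport a maximal chain avoiding $\alpha$. Your argument is instead fibre-geometric: you project an irreducible component $Z$ of $V(J)$ to $W\subseteq V(I)$, invoke the generic-fibre dimension formula $\dim Z=\dim W+\delta$, and in the case $W\not\subseteq V(D)$ you apply the Proposition on completely singular fibres to force $\delta\leq n-1$, recovering the missing unit of codimension. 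This is a legitimate and arguably cleaner route: it explains \emph{why} the extra codimension appears (the singular fibre drops dimension exactly off $V(I+D)$), and it puts the Proposition to real work, whereas the paper states that Proposition for intuition but does not use it in the proof. The cost is that you must justify the fibre-dimension formula and the height comparisons in the extension $R\to B$, which you flag; the paper's localization argument trades that for an explicit polynomial manipulation and a Jacobson-ring trick. Both approaches need condition $(a)$ at the same spot, so neither proves the stronger (and false) claim that $(b)$ alone is equivalent to normality.
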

 \begin{proof}
  We have already proved in Lemma \ref{domain} that (a) is a necessary and sufficient condition for $A$ being an integral domain. 
  Let's prove that (b) is equivalent to normality. 
Effectively, following Lemma \ref{sing} we just need to see the condition (b) is equivalent to ${\rm codim}(J,B)>2$, or $J=B$. Let's denote the 
last condition by (b').
By Remark \ref{derivatives} we know that $J\subseteq (I+D)B$. Suppose that (b') holds. First, if $J=B$, then $(I+D)B=B$, implying 
$I+D=R$. Second, if ${\rm codim}(J,B)>2$, then by Remark \ref{codimension} we get

\[{\rm codim}(I+D,R)={\rm codim}((I+D)B,B\geq {\rm codim}(J,B)>2.\]
Conversely, assume that (b) holds and $J\neq B$. We prove that ${\rm codim}=(J,B)>2$. 
Let $Q$ be a prime ideal of $B$ that contains $J$. First, assume that $(I+D)B\subseteq Q$, then $I+D\neq R$, therefore
${\rm codim}(I+D,R)>2$, so, again by Remark \ref{codimension}  
${\rm codim}((I+D)B,B)>2$, which implies that ${\rm codim}(Q,B)>2$. Second, suppose that $I+D\nsubseteq Q$, then necessarily one of the partial 
derivatives $\partial f/ \partial x_j$ or $ \partial f_i/\partial x_j$ is not contained in $Q$, because $IB\subseteq J\subseteq Q$. 
In fact, there exits some $b\in {1, \ldots, n}$ and some $c\in {1, \ldots, r}$ with $\partial f_b/\partial x_d\notin Q$, cause if 
not, all $\partial f_i/ \partial x_j$ would be contained in $Q$ and also the elements 
$\sum_{i=1}^n(\partial f_i/\partial x_j)T_i+\partial f/\partial x_j$ and therefore $\partial f/\partial x_j$ for any $j$, thus 
$D$ would be also contained in $J$, which is not the case. For simplicity suppose that $Q$ not contained the element 
$\alpha:= \partial f_1/\partial x_1$ and let's write $l:=\sum_{i=1}^n(\partial f_i/\partial x_1)T_i+\partial f/\partial x_1$. 
Let $\psi$ be the following homomorphism of $R_{(\alpha)}$ algebras 
\[\psi:B_{(\alpha)}\approxeq R_{(\alpha)}[T_1,\ldots ,T_n]\longrightarrow R_{(\alpha)}[T_2, \ldots, T_n], \]

that sends $T_1$ to $g:=-\alpha^{-1}(\sum_{i=2}^n(\partial f_i/\partial x_1)+\partial f/\partial x_1)$ and $T_j$ to $T_j$, for 
$j\geq 2$. Clearly, $\psi$ is surjective. Moreover, $ker(\psi)=(T_1-g)$. To see this let $S\in ker(\psi)$. Then using the binomial 
expansion we can write it in the form:
\[ S=S(x_1,\ldots, x_r,(T_1-g)+g,\ldots,T_n)=S_0(x_1,\ldots, x_r,(T_1-g),\ldots,T_n)+\] \[S(x_1,\ldots, x_r,g,\ldots,T_n),\]
\[=S_0(x_1,\ldots, x_r,(T_1-g),\ldots,T_n)+\psi(S)\]
\[=S_0(x_1,\ldots, x_r,(T_1-g),\ldots,T_n),\]
with $S_0$ being divisible by $T_1$, which implies that the former expression is divisible by $T_1-g$. Thus $S\in (T_1-g)$.

On the other hand, in the ring $ R_{(\alpha)}[T_1,\ldots,T_n]$ we know that $(T_1-g)=(l)$, therefore $\psi$ induces an isomorphism 
between $ R_{(\alpha)}[T_1,\ldots,T_n]/(l)$ and $ R_{(\alpha)}[T_2,\ldots,T_n]$. Denote by $Q_0$ the image under $\psi$ of 
$QR_{(\alpha)}[T_1,\ldots,T_n]$, and assume for the sake of contradiction that ${\rm codim}(Q,B)\leq2$ then we have the following chain of 
inequalities:
\[d:={\rm dim}(B/Q)={\rm dim}B-{\rm codim}(Q,B)=n+r-{\rm codim}(Q,B)\geq n+r-2.\]

Besides, $B$ is a Jacobson ring, hence there exists a maximal ideal $m$ containing $Q$ such $\alpha \notin m$, otherwise $\alpha$ would 
be contained in the intersection of all the maximal ideals containing $Q$, which is $Q$, absurd. Now, let's consider a saturated 
chain of primes ideals from $Q$ to $m$, which exits in virtue of Zorn's lemma. Besides, this chain has length exactly $d$ because $B/Q$ is 
an affine domain and therefore, $d$ is the length of any saturated chain of primes on it (see fundamental results on Chapter 1). Then,
\[ Q=Q_0\subsetneqq Q_1 \subsetneqq, \ldots, \subsetneqq Q_{d-1} \subsetneqq Q_d=m.\]

Now, we can consider this chain in $R_{(\alpha)}[T_1,\ldots,T_n]$, because no $Q_i$ contains $\alpha$. This shows that 
${\rm dim}(R_{(\alpha)}[T_1,\ldots,T_n])/Q^e\geq d$ and, in fact, the equality holds because we are localizing and thus the dimension 
cannot be bigger that the dimension of the original ring. Besides, $\psi$ induces an isomorphism between 
$R_{(\alpha)}[T_1,\ldots,T_n]/Q^e$ and $R_{(\alpha)}[T_2,\ldots,T_n]/Q_0$, then finally, recalling that ${\rm codim}(I,R)\geq2$ and 
that
$l\in Q$ we get 
\[d={\rm dim}(R_{(\alpha)}[T_1,\ldots,T_n])/Q^e)={\rm dim}(R_{(\alpha)}[T_2,\ldots,T_n])/Q_0)\leq \]
\[ {\rm dim}(R_{(\alpha)}[T_2,\ldots,T_n])/I^e) \leq {\rm dim}(R[T_2,\ldots,T_n])/I^e)\]
\[={\rm dim}((R/I)[T_2,\ldots, T_n])={\rm dim}(R/I)+n-1=\] 
\[{\rm dim}R-{\rm codim}(I,R)+n-1\leq r+n-1-2< n+r-2.\]
Which is a contradiction with the former estimate of $d$.
Finally, if all $f_i=0$ then $J=I+D$ and then from the fact that ${\rm codim}((I+D),R)={\rm codim}((I+D),B)$ we deduce from 
Lemma \ref{sing} 
that condition (b) is equivalent to the normality of $A$. 

 \end{proof}
 Now, we state a direct application of the previous Theorem to normal affine varieties. 
 As said before, our convention is that $\dim\emptyset=-1$.
\begin{corollary}Let $R=k[x_1,\ldots ,x_r]$ be the ring of polynomials over an algebraically closed field $k$; 
$B=R[T_1, \ldots ,T_n]$; $f,f_1, \ldots , f_n \in R$; \[I=(f,f_1, \ldots , f_n)\] and 
$D=(\partial f/ \partial x_j, \partial f_i/ \partial x_j)$. 
Assume that $(h)$ is a radical ideal, where $h=f_1T_1+\cdots+f_nT_n+f.$
 Let's denote by $V=V(I)\subseteq k^r$ and $W=V(D)\subseteq k^r$ the affine varieties defined by $I$ and $D$, respectively. 
 Then, $X=V(H)\subseteq k^{n+r}$ is a normal (irreducible) variety if and only if the following two conditions holds simultaneously
\begin{enumerate}[(1)]
 \item ${\rm dim}V\leq r-2$.
\item ${\rm dim}(V\cap W)<r-2$.

Moreover, in the case that all $f_i=0$, then  (2) is a necessary and sufficient condition for $X$ being a normal (irreducible) variety.
\end{enumerate}
\end{corollary}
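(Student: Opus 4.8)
The strategy is to deduce this statement directly from Theorem~\ref{normalcriterion} by translating its ideal-theoretic hypotheses into geometry via the Nullstellensatz. First I would record that, since $(h)$ is a radical ideal and $k$ is algebraically closed, the reduced ring $A=B/(h)$ is precisely the affine coordinate ring of the variety $X=V(h)\subseteq k^{n+r}$; hence $X$ is an irreducible normal variety if and only if $A$ is a normal domain (cf.\ \cite[Exercise~I.3.17]{hartshornealgebraic}). It therefore suffices to show that hypotheses (a) and (b) of Theorem~\ref{normalcriterion} correspond, under this dictionary, to conditions (1) and (2).

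The translation rests on the following elementary fact: for any ideal $\mathfrak a\subseteq R=k[x_1,\dots,x_r]$ the weak Nullstellensatz gives $V(\mathfrak a)=\emptyset$ exactly when $\mathfrak a=R$, and when $\mathfrak a\neq R$ one has $\dim V(\mathfrak a)=\dim R/\mathfrak a$, so that (using that $R$ is catenary and equidimensional, as in Remark~\ref{codimension}) $\mathrm{codim}(\mathfrak a,R)=r-\dim V(\mathfrak a)$. Applying this to $\mathfrak a=I$ turns ``$\mathrm{codim}(I,R)\geq 2$ or $I=R$'' into ``$\dim V\leq r-2$ or $V=\emptyset$'', which with the convention $\dim\emptyset=-1$ is exactly condition (1). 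Applying it to $\mathfrak a=I+D$, and using $V(I+D)=V(I)\cap V(D)=V\cap W$, turns ``$\mathrm{codim}(I+D,R)>2$ or $I+D=R$'' into ``$\dim(V\cap W)\leq r-3$ or $V\cap W=\emptyset$'', i.e.\ condition (2).

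Putting these together, Theorem~\ref{normalcriterion} asserts that $A$ is a normal domain if and only if (a) and (b) hold, hence if and only if (1) and (2) hold; and the closing ``moreover'' clause is obtained by feeding the same translation into the corresponding clause of Theorem~\ref{normalcriterion}. I expect no real difficulty here: the content is entirely in Theorem~\ref{normalcriterion}, and what remains is only the careful bookkeeping of (i) the fact that radicality of $(h)$ is precisely what makes ring-theoretic normality of $A$ equivalent to normality of $X$ as a variety, and (ii) the whole-ring/empty-set boundary cases, which are absorbed into the displayed dimension inequalities by the convention $\dim\emptyset=-1$ (with the tacit standing assumption $h\neq 0$ of this section, so that Theorem~\ref{normalcriterion} applies).
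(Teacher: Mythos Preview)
Your proposal is correct and follows essentially the same approach as the paper: reduce normality of the variety $X$ to normality of the coordinate ring $A=B/(h)$ via the radicality hypothesis and \cite[Exercise~I.3.17]{hartshornealgebraic}, then translate conditions (a) and (b) of Theorem~\ref{normalcriterion} into (1) and (2) through the Nullstellensatz and the formula $\dim V(\mathfrak a)=r-\mathrm{codim}(\mathfrak a,R)$, handling the boundary cases $I=R$, $I+D=R$ with the convention $\dim\emptyset=-1$. Your write-up is in fact slightly more explicit than the paper's about the empty-set bookkeeping and the standing assumption $h\neq 0$.
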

\begin{proof} Recall that a variety is normal if for any point $x\in X$, the stalk $\mathcal{O}_{X,x}$ is a normal domain 
(see \cite[Exercise I.3.17]{hartshornealgebraic}).
Since $(h)$ is a radical ideal, we know that the forcing algebra $A=B/(h)$ is exactly the ring of coordinates of $X$. Since $X$ is 
affine and normality is a local property we have that $X$ is a normal (irreducible) variety if and only if $A$ is a normal domain.
Besides, from Hilbert's Nullstellensatz we get 
\[{\rm dim}V={\rm dim}(R/I(V))={\rm dim}(R/\rad(I))={\rm dim}(R/I)\] \[={\rm dim}R-{\rm codim}(I,R)=r-{\rm codim}(I,R),\] 
and analogously \[{\rm dim}(V\cap W)=r-{\rm codim}(I+D,R).\]
 From this and the fact that $V=\emptyset$ (or $V\cap W=\emptyset$), if and only if $I=R$ (or $I+D=B$), we rewrite the 
 conditions (a) and (b) of the former theorem as (1) and (2).
\end{proof}

As a comment, we say that the discussion beginning at Example \ref{normalintuition} is essentially the way in which the above 
criterion of normality was discovered.
Lastly, in order to support the former intuition we dedicate the next pair of sections to study two interesting and enlightening examples.

\section{An Enlightening Example}
In this section we study an specific example of a forcing algebra with several forcing equations and we explore the some interesting properties. This example shows how rich and interesting is the formal study of forcing algebras on its own.

 Let $R=k[x,y]$ be the ring of polynomials over a (perfect) field $k$, $B=R[T_1,T_2]$, $A=B/H$, where 

\[ H=(h_1,h_2)=(xT_1+yT_2,yT_1+xT_2)=\left( \left( \begin{array}{cc}x&y\\y&x\end{array}\right) \cdot \left( \begin{array}{c}T_1\\T_2 \end{array}\right)\right). \]

The determinant of the associated matrix $M$ is $x^2-y^2=(x+y)(x-y)$. It is easy to check that $h_1$ is irreducible and that $h_2$ 
does not belong to the ideal generated by $h_1$. Therefore $h_1,h_2\subseteq B$ is a regular sequence and hence, by former comments
$H$ has pure codimension $2$.

Let $P$ be a minimal prime of $H$. Then, by a previous remark, $P$ contains the elements $\det MT_i=(x-y)(x+y)T_i$ for $i=1,2$. If 
$\det M\notin P$, then $T_i\in P$, and therefore $P=(T_1,T_2)$. Now, assume that $\det M\in P$, then $x-y\in P$ or $x+y\in P$. In the 
first case, $h_1-T_1(x-y)=y(T_1+T_2)$ should be in $P$. But, if $y\in P$ then $x=(x-y)+y\in P$, which implies that $P=(x,y)$. 
If $T_1+T_2\in P$ then it is easy to check that $P=(x-y,T_1+T_2)$, since this is a prime ideal containing $H$. On the other hand, if $x+y\in P$, 
then, similarly we see that $P=(x,y)$, or $P=(x+y,T_1-T_2)$. In conclusion, the minimal primes of $H$ (which are, in fact, the 
associated primes of $H$, because $A$ is a Cohen-Macaulay ring) are the four ideals $P_1=(T_1,T_2)$, $P_2=(x,y)$, $P_3=(x-y,T_1+T_2)$ and 
$P_4=(x+y,T_1-T_2)$. 

This example shows that Theorem \ref{irreducibility} is false for several forcing equations, since $\Spec A$ is not an irreducible space
 but the ideal generated by the forcing data $(x,y)$ has height two.
 

Let $V_i=V(P_i)\subseteq k^4$ be the affine variety define by $P_i$, which correspond to the irreducible 
components of $V=V(H)$. Now, the intersections of any couple of this components correspond to singular points of $V$ (we assume for a while that 
$k$ is algebraically closed, and we replace $H$ by $\rad H$ in order to work with the corresponding variety $V$), because 
the ring of coordinates of $V$ localized at the maximal ideal corresponding to such a points has at least two irreducible components 
and therefore it is not an integral domain, in particular, it is not a regular local ring, since local regular rings are domains.

This is a way to see geometrically the non-normality of $V$, because the normality is a local property and the localization at these 
intersection points, say $\idealp\in{\rm Spm}(A)$, is not a normal ring. 
In fact, a local ring has clearly a connected spectrum, therefore $A_{\idealp}$ cannot be a direct
 product of normal domains (\cite{brennergomezconnected} \S1). Besides, by the former comment, $A_{\idealp}$ cannot be neither a normal domain.
 
 Returning to our computations, we see that the intersection of these irreducible components are, in general, defined by lines and,
 in two cases, defined by just one point. In fact, 
$V_1\cap V_2=V(x,y,T_1,T_2);V_1\cap V_3=V(T_1,T_2,x-y);V_1\cap V_4=V(T_1,T_2,x+y)$;$V_2\cap V_3=V(x,y,T_1+T_2);V_2\cap V_4=V(x,y,T_1-T_2)$
 and $V_3\cap V_4=V(x,y,T_1,T_2)$.

Furthermore, It is easy to see that $\Spec A$ is connected (see \cite[Proposition 1.2.]{brennergomezconnected}), since we are in the homogeneous case.
Moreover, $V(P_1)$ is an horizontal component, $V(P_2)$ a vertical component and
 $V(P_3)$ and $V(P_4)$ behave like ``mixed'' components i.e., they do not dominate the base nor are they the preimage of a subset of the base.
 Besides, $\Spec A$ is also locally (over the base) connected because every pair of 
 minimal components have non-empty intersection and the elementary fact that the minimal primes of a localization are exactly the 
 minimal primes of the original ring not intersecting the multiplicative system.
 
In the case that $k$ is a perfect field, we can use also the Jacobian Criterion in order to prove again that $A$ is not a normal ring. 
In fact, as seen before, the pure codimension of $H$ is two, since $\left\lbrace h_1,h_2 \right\rbrace$ is a regular sequence. So, 
the singular locus in $\Spec A$ is given by the $2\times2$ minors of the Jacobian matrix defined by the partial derivatives of the $h_i$, 
that is, 
\[J=\left( T_1^2-T_1^2,x^2-y^2,yT_1-xT_2,xT_1-yT_2\right).\] 
Thus in order to test normality we should find the codimension 
of $J$ in $A$ and determine if it is bigger or equal than two. Since the pure codimension of $H$ is two we can translate our problem 
to the ring of polynomial in four variables $B=k[x,y,T_1,T_2]$ and to test if the corresponding Jacobian ideal 
\[J_0=\left( T_1^2-T_1^2,x^2-y^2,yT_1-xT_2,xT_1-yT_2,h_1,h_2\right)\] has codimension bigger or equal to four (in general, the codimension
 of a prime ideal decreases in $n$, if we mod out by ideals of pure codimension $n$, mainly because an affine domain is catenary and its
 dimension is the length of any maximal chain of prime ideals \cite[Corollary 13.6]{eisenbud}.
But, after some computations we can show that the prime ideals that contain $J_0$ are exactly the ideals defining the varieties 
corresponding to the intersections of pairs of the irreducibles components of $V$. That is,
$(x,y,T_1,T_2)$, $(T_1,T_2,x-y)$, $(T_1,T_2,x+y)$, $(x,y,T_1+T_2)$ and $(x,y,T_1-T_2)$. 
Therefore $\codim(J_0,B)=3$, and then, $\codim(\overline{J_0},A)=1<2$, implying that $A$ does not satisties Serre's condition (R1). Hence, 
by Serre's Normality Criterion $A$ is not a normal ring. Moreover, by the same reason $B/\rad H$ is not a normal ring, and this is 
equivalent to the non-normality of the variety $V(H)\subseteq k^4$. 

Geometrically, if $k$ is an algebraic closed field, it means just that the singular points of $V$, which correspond to the maximal 
ideals containing $J_0$, are exactly the points in the intersections of the different irreducible components of the variety,
which correspond to the geometrical intuition of singularities.

This example suggests the following conjecture.
\begin{conjecture}
 In the homogeneous case, assume that $R=k[x_1,...,x_r]$, and suppose  $H=(h_1,...,h_m)=P_1\cap ... \cap P_s$, where $P_i$ are the minimal primes, for $i=1,...,s$. 
 Then $V(P_i)\cap V(T_1,...,T_n)\neq\emptyset$.
\end{conjecture}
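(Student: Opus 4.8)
The plan is to exploit the fact that, in the homogeneous case, the ideal $H$ is graded. Grade $B=R[T_1,\ldots,T_n]$ by total degree in $T_1,\ldots,T_n$, so that $B_0=R$ and each forcing equation $h_i=\sum_j f_{ij}T_j$ lies in $B_1$; thus $H=(h_1,\ldots,h_m)$ is a homogeneous ideal for this $\NN$-grading. The first step is then to invoke the standard fact that every prime minimal over a homogeneous ideal is itself homogeneous: for a prime $P$, the ideal $P^{*}$ generated by the homogeneous elements lying in $P$ is again prime, and $H\subseteq P^{*}\subseteq P$ since $H$ is homogeneous, so minimality of $P$ over $H$ forces $P^{*}=P$. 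Hence each $P_i$ is a proper homogeneous ideal of $B$.

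With this in hand, the assertion $V(P_i)\cap V(T_1,\ldots,T_n)\neq\emptyset$ reduces to showing that $P_i+(T_1,\ldots,T_n)B$ is a proper ideal of $B$, and this I would prove as a general statement about an arbitrary proper homogeneous ideal $\mathfrak a\subsetneq B$. Writing $\mathfrak a=\bigoplus_{d\ge 0}(\mathfrak a\cap B_d)$ and noting that the irrelevant ideal $(T_1,\ldots,T_n)B$ absorbs everything of positive degree, one obtains
\[ \mathfrak a+(T_1,\ldots,T_n)B=\mathfrak a_0 B+(T_1,\ldots,T_n)B,\qquad \mathfrak a_0:=\mathfrak a\cap R . \]
Taking the degree-$0$ component of the right-hand side returns $\mathfrak a_0$, so if this ideal were all of $B$ we would get $\mathfrak a_0=R$, i.e.\ $1\in\mathfrak a$, contradicting $\mathfrak a\subsetneq B$. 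Applying this with $\mathfrak a=P_i$ produces a prime of $B$ containing $P_i+(T_1,\ldots,T_n)B$, which contains $H$ because every $h_i$ is a $T$-linear form, whence $H\subseteq(T_1,\ldots,T_n)B$; this prime therefore represents a point of $\Spec A$ lying on both $V(P_i)$ and the zero section $s(X)=V(T_1,\ldots,T_n)$.

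Geometrically the argument says that the $\GG_m$-action on $\AA^n_R$ rescaling the $T_j$'s preserves $\Spec A$ precisely because we are in the homogeneous case, fixes the zero section $V(T_1,\ldots,T_n)$, and that every invariant irreducible component of $\Spec A$ must therefore meet that fixed locus. I do not expect a genuine obstacle: the two ingredients are the homogeneity of the $P_i$ (a standard lemma, valid in any $\ZZ$-graded ring) and the routine bookkeeping in the displayed identity, and the homogeneous hypothesis is exactly what supplies both. The only points worth flagging are the degenerate cases — if $A=0$ there are no $P_i$ and the claim is vacuous, and when some $f_{ij}$ is a unit the component in question may simply coincide with $s(X)$, which still meets $V(T_1,\ldots,T_n)$ trivially. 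One may also remark that the proof uses neither that $k$ is a field nor that $R$ is a polynomial ring, only that $H$ is generated by forms linear in the $T_j$.
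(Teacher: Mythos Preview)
Your argument is correct, and in fact it settles what the paper leaves open: the statement is presented there as a \emph{Conjecture}, suggested by the enlightening example of \S6, with no proof offered. Your approach via the $\NN$-grading on $B=R[T_1,\ldots,T_n]$ by total $T$-degree is the natural one: homogeneity of $H$ forces each minimal prime $P_i$ to be homogeneous (via the standard $P_i^{*}=P_i$ argument you sketch), and then the degree-$0$ bookkeeping shows that $(P_i+(T_1,\ldots,T_n)B)\cap B_0=P_i\cap R\subsetneq R$, so $P_i+(T_1,\ldots,T_n)B$ is proper and the intersection $V(P_i)\cap V(T_1,\ldots,T_n)$ is nonempty.

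Your closing remark is also accurate: nothing in the argument uses that $k$ is a field or that $R$ is a polynomial ring, only that the forcing equations are $T$-linear (the ``homogeneous case''), so you have in fact proved a statement more general than the one conjectured.
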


\section{An Example of Normalization}
On this section we will compute explicitly the normalization of a forcing algebra by elementary methods illustrating how good examples
lead us in a natural way to the study of general basic properties of normal domains.
 
Let $k$ be a perfect field. Our example is a particular case of the Example \ref{normalintuition}. Let $R=k[x,y]$, $B=R[t,s]$, $A=B/(h)$,
where $h=x^2t+y^2s+xy$. Now, with the notation of section 3, $I=(x^2,y^2,xy$), $D=(x,y)$, and so, $I+D=(x,y)$. By Theorem \ref{normalcriterion} $A$ is a 
non-normal domain, because $\codim(I,B)\geq2$, but $\codim(I+D,B)=2$. Besides, the integral closure, or normalization of $A$, $\overline{A}$ is a 
module-finite extension of $A$ (in general, this is true for finitely generated algebras over complete local rings, see 
\cite[Exercise 9.8]{hunekeswanson}).

Now, we will give an explicit description of $\overline{A}$ as an affine domain. 

First, let $K=K(A)$ be the field of fractions of $A$ and let $u=tx/y\in K$. Then, if we consider the forcing equation $h$ in $K[t,s]$, we 
get the following integral equation for $u$, after multiplication by $t/y^2$: 
\[ (tx/y)^2+(tx/y)+st=0. \]
Let $A'=A[u]$ be the $A-$subalgebra of $K$ generated by $u$. So, we rewrite $h$ considered in $A'$, by means of $yu=xt$, to obtain 
the equation $0=h=y(xu+ys+x)$. But, $y\neq0$, therefore $xu+ys+x=0$. 

Let $C=k[X,Y,T,S,U]$ be the ring of polynomials. Define $\phi:C\rightarrow A'$ the homomorphism of $k-$algebras sending each capital
 variable into its corresponding small variable. Note that from the previous considerations the ideal
 $P=(YU-XT,XU+YS+X,U^2+U+TS)\subseteq \ker\phi$. We will see that $P=\ker\phi$. Effectively, let's write $E=k[X,Y,U,T]/(YU-XT)$. Then, 
 $E$ is a forcing algebra and by Theorem \ref{normalcriterion} is a normal domain. 
 
First, we prove that $P$ is a prime ideal. Define $Q=K(E)$, then, informally if we consider the equations
\[XU+YS+X=U^2+U+TS=0\]
in the variable $S$ and solve them, it lead us to obtain the equality $S=-(U^2+U)/T=-(XU+X)/Y$ in a ``suitable'' field of fractions. But,
in fact, it hods that 
\[-(U^2+U)/T=-(XU+X)/Y\in Q,\]
because 
\[-Y(U^2+U)=-TXU-XT=-T(XU+X)\in D\],
 due to the fact that $YU=XT\in E$. Write $S'=-(U^2+U)/T=-(XU+X)/Y\in Q$ and consider the natural homomorphism $\psi: E[S]\rightarrow E[S']\subseteq Q$,
  where $E[S]$ denote the ring of polynomials in the variable $S$. We will prove that $\ker \psi=(XU+YS+X,U^2+U+TS)$. For that we need  
  the following basic lemma about normal domains:

  \begin{lemma}\label{denominator}
   Let $R$ be a normal domain, $q\in K(R)$, 
   \[I=(bx-a\in R[x]: q=a/b; a,b\in R\},\] and 
   $(R:q)=\{b\in R:bq\in R\}$ be the denominator ideal.
   Consider the homomorphism of $R-$algebras
   \[\varphi:R[x]\rightarrow R[q]\subseteq K(R),\]
   sending $x$ to $q$. 
   Then the following holds:
   \begin{enumerate}
    \item If $q\notin R$, then $\codim((R:q),R)=1$.
    \item Suppose that $(R:q)=(b_1,...,b_m)\in R$, such that $q=a_i/b_i$, for 
    some $a_i\in R$. Then, $I=(b_1x-a_1,...,b_mx-a_m)$.
    \item $\ker \varphi=I$.
    
   \end{enumerate}

  \end{lemma}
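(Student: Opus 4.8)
The plan is to take the three parts in order. The normality hypothesis will be used through one single fact: a normal (Noetherian) domain $R$ is the intersection $\bigcap_{{\rm ht}\,\mathfrak{p}=1}R_{\mathfrak{p}}$ of its localizations at height-one primes, each such $R_{\mathfrak{p}}$ being a DVR. Parts (1) and the crucial step of (3) both come out of this; part (2) is purely formal.

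For (1), first note that $(R:q)$ is a nonzero proper ideal: it contains any denominator $s$ of $q$ (if $q=r/s$ then $sq=r\in R$), and it is proper because $1\cdot q=q\notin R$; hence ${\rm ht}(R:q)\geq1$. If we had ${\rm ht}(R:q)\geq2$, then $(R:q)$ would be contained in no height-one prime $\mathfrak{p}$, so for every such $\mathfrak{p}$ some $b\in(R:q)$ would be a unit in $R_{\mathfrak{p}}$ and hence $q=(bq)/b\in R_{\mathfrak{p}}$; intersecting over all height-one $\mathfrak{p}$ would give $q\in R$, a contradiction. Thus ${\rm ht}(R:q)=\codim((R:q),R)=1$. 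Part (2) needs no normality: the inclusion $(b_1x-a_1,\dots,b_mx-a_m)\subseteq I$ is clear, and conversely a generator $bx-a$ of $I$ has $b\in(R:q)$ and $a=bq$, so writing $b=\sum r_ib_i$ gives $a=bq=\sum r_ia_i$ and therefore $bx-a=\sum r_i(b_ix-a_i)$.

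For (3), the inclusion $I\subseteq\ker\varphi$ is immediate since $\varphi(b_ix-a_i)=b_iq-a_i=0$. I would prove $\ker\varphi\subseteq I$ by induction on $\deg g$ for $g\in\ker\varphi$, the engine being the sublemma: \emph{if $g=g_nx^n+\dots+g_0\in\ker\varphi$ has degree $n\geq1$, then $g_n\in(R:q)$.} Granting it, $g_nx-g_nq$ is a generator of $I$, so $x^{n-1}(g_nx-g_nq)=g_nx^n-(g_nq)x^{n-1}\in I\subseteq\ker\varphi$; subtracting it from $g$ yields $g'\in\ker\varphi$ of degree at most $n-1$, and the induction (base case $\deg g=0$, where $g=\varphi(g)=0$) finishes. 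To prove the sublemma it suffices to show $g_nq\in R$, and since $R=\bigcap_{{\rm ht}\,\mathfrak{p}=1}R_{\mathfrak{p}}$ it suffices to check $g_nq\in R_{\mathfrak{p}}$ in each DVR $R_{\mathfrak{p}}$ with ${\rm ht}\,\mathfrak{p}=1$. If $q\in R_{\mathfrak{p}}$ this is trivial; otherwise write $q=\pi^{-e}u$ with $\pi$ a uniformizer, $e\geq1$, $u$ a unit, clear denominators in $g(q)=0$ to get $\sum_{k=0}^{n}g_k\pi^{e(n-k)}u^k=0$ in $R_{\mathfrak{p}}$, and compare valuations: since the $k=n$ summand cannot be the strictly smallest in $\pi$-adic valuation, $\nu_{\mathfrak{p}}(g_n)\geq e$, whence $g_nq=g_n\pi^{-e}u\in R_{\mathfrak{p}}$.

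I expect the valuation estimate in the sublemma to be the one delicate point: one must extract $\nu_{\mathfrak{p}}(g_n)\geq e$, not merely $\nu_{\mathfrak{p}}(g_n)\geq1$, since it is exactly the full bound that makes $g_nq$ integral at $\mathfrak{p}$ and hence, after intersecting over all height-one primes, forces $g_n\in(R:q)$. Once this is available, the degree induction in (3) and parts (1)–(2) are all routine.
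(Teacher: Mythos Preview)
Your proof is correct, and parts (1) and (2) are essentially identical to the paper's argument. The only real difference is in the key sublemma for (3), showing that the leading coefficient $g_n$ lies in $(R:q)$. The paper does this in one stroke: from $g_nq^n+g_{n-1}q^{n-1}+\cdots+g_0=0$ it multiplies through by $g_n^{\,n-1}$ to obtain
\[
(g_nq)^n+g_{n-1}(g_nq)^{n-1}+\cdots+g_0g_n^{\,n-1}=0,
\]
a monic equation for $g_nq$ over $R$, so normality of $R$ gives $g_nq\in R$ directly. Your route instead localizes at each height-one prime and extracts $\nu_{\mathfrak p}(g_n)\geq e$ from the valuation inequality. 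Both are valid; the paper's trick is shorter and uses only the definition of integral closure, while your argument makes the role of the height-one primes (and the DVR structure) more explicit, which fits nicely with how you already used them in (1). Either way, once $g_nq\in R$ is known, the degree induction is the same.
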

\begin{proof}
(1) It is a well known fact that any normal Noetherian domain is the intersection of its localizations on primes of height one (see 
\cite[Corollary 11.4]{eisenbud}). We argue by contradiction. If $\codim((R:q),R)\geq2$, then $(R:q)$ is 
not contained in any prime ideal $P\subseteq R$ of height one. In particular, there exists for every such prime ideal $P$ an element 
$b_P\notin P$, but $b_P\in (R:q)$, meaning that there is $a_P\in R$, with $q=a_P/b_P\in R_P$. In conclusion, $q\in \cap_{\text{ht}P=1}R_P=R$.

(2) Let $bx-a\in I$. That means, in particular, that $b\in (R:q)$. So, we can write $b=c_1b_1+\cdots+c_rb_r\in R$, for some 
$c_i\in R$, $i=1,...,r$. Now, let $a_i\in R$ be elements such that $q=a_i/b_i$. Since, 
\[a=bq=\sum_{i=1}^nc_ib_iq=\sum_{i=1}^nc_ia_i,\]
it is
 straightforward to verify $bx-a=\sum_{i=1}^rc_i(b_ix-a_i)$, as desired.
 
(3) Clearly $I\subseteq \ker \varphi$. For the other containment, let $f\in\ker\varphi$ we argue by induction on the degree of $f$.
Write $f=v_nx^n+\cdots+v_0$. The case $n=1$ is clear. So, assume $n\geq2$. First, we know that 
\[v_nq^n+\cdots+v_0=0\in K(R),\]
then after multiplying by $v_n^{n-1}$, we get the integrity equation for $v_nq$, 
\[(v_nq)^n+v_{n-1}v_n(v_nq)^{n-1}+\cdots+v_0v_n^{n-1}=0.\]
So, 
 $v_nq\in R$, because $R$ is a normal domain. Therefore, there exists $d\in R$ such that $q=d/v_n$. Now, $f-x^{n-1}(v_nx-d)\in\ker\varphi$, 
 and it has lower degree. Thus, by the induction hypothesis $f-x^{n-1}(v_nx-d)\in I$, and then $f\in I$, because $v_nx-d\in I$.
\end{proof}

We continue with our discussion, by abuse of notation we write with the same capital letters its classes in $E$. Now, we know that 
$Y,T\in (E:S')$. Besides, $(X,T)\in E$ in a prime ideal of codimension one in $E$, therefore in virtue of Lemma \ref{denominator}(1), 
$(Y,T)=(E:S')$. Hence, applying again Lemma \ref{denominator}(2)-(3) we see that
\[\ker\psi=((Y)S+(XU+X),(T)S+(U^2+U)),\]
as desired.
In conclusion,
\[E[S]/(XU+YS+X,U^2+U+TS)\cong E[S']\]
is an integral domain, therefore 
\[C/P\cong E[S]/(XU+YS+X,U^2+U+TS)\]
so is.

On the other hand, since the extension $A\rightarrow A'$ is integral, both rings have the same dimension (it is a direct consequence from the 
Going Up, see \cite[Proposition 4.15]{eisenbud}). But, $\dim A=\dim B-{\rm ht}(h)=3$, and then 
\[3=\dim A'=\dim C/\ker\phi=5-{\rm ht}(\ker\phi),\]
implying ${\rm ht}(\ker\phi)=2$. Besides, it is easy to check that $P\subseteq \ker\phi$ is 
a (prime) ideal of height strictly bigger that one, therefore both ideals coincide. Finally, we can apply Corollary \ref{corjacobian} to 
the affine domain $C/P$. After computations we verify that
\[(U+1)(2U+1),U(2U+1),U(U+1)+ST,ST\in J,\]
where $J$ denotes the Jacobian ideal, defining 
the singular locus of $C/P$. But, easily we check that 
\[C=((U+1)(2U+1),U(2U+1),U(U+1)+ST,ST),\]
therefore the singular locus is empty and
 then $C/P$ is regular, and in particular, normal. 
 In conclusion, an explicit description of the normalization of $A$ as an affine ring is 
\[\overline{A}\cong k[X,Y,T,S,U]/(YU-XT,XU+YS+X,U^2+U+TS).\]

\begin{remark}
One can go forward in a natural way by computing the normalization for forcing algebras with forcing equations of the form $h=x^nt+y^n+xy$, for $n\geq2$. However,
just for the case $n=3$, new methods seem to be needed. In particular, we get an ideal 
\[P=(YU-X^2T,XU+X+Y^2S,U^2+U+XYST).\]
But,
in order to apply Lemma \ref{denominator}, the most challenging part appears to be finding an explicit description of the 
generators of the corresponding denominator ideal, cause \[S'=-(X+UX)/Y^2=-(U^2+U)/XYT,\] and therefore we just know that 
$Y^2,XYT\in(D:S')$, where 
\[E=k[X,Y,U,T]/(YU-X^2T).\]
But, on this case the ideal $(Y^2,XYT)$ is not prime as in the argument before where we get the prime ideal $(X,Y)$ 
as denominator ideal.
\end{remark}
This section suggests on its own a way for forthcoming research on computing the normalization of forcing algebras. 
\section*{Acknowledgement}
Danny de Jes\'us G\'omez-Ram\'irez would like to thank to his parents Jos\'e Omar G\'omez Torres and Luz Stella Ram\'irez Correa for all the assistance, love and inspiration. Besides, he wishes to thank the German Academic Exchange Service (DAAD) for the financial and academic support.

\bibliographystyle{amsplain}

\begin{thebibliography}{10}

\bibitem{atimac}
M.~F. Atiyah and I.G. Macdonald, \emph{{Introducion to Commutative Algebra}},
  Addison-Wesley, Reading, MA, 1969.

\bibitem{brennerbarcelona}
H.~Brenner, \emph{Tight closure and vector bundles}, Three Lectures on
  Commutative Algebra (J.~Elias S.~Zarzuela G.~Colom\'{e}-Nin, T.
  Cortodellas~Benitez, ed.), University Lecture Series, vol.~42, AMS, 2008,
  pp.~1--71.

\bibitem{brennerforcingalgebra}
\bysame, \emph{Forcing algebras, syzygy bundles, and tight closure},
  Commutative Algebra. Noetherian and non-Noetherian Perspectives, Springer,
  2011.

\bibitem{brenneraffine}
\bysame, \emph{Some remarks on the affineness of ${A}^1$-bundles}, ArXiv
  (2012).

\bibitem{brennergomezconnected}
H.~Brenner and D.~Gomez-Ramirez, \emph{{O}n the connectedness of the spectrum
  of forcing algebras}, Revista Colombiana de Matem\'aticas \textbf{41} (2014),
  no.~1, 1--19.

\bibitem{gomezramirezthesis}
Danny de~Jes\'us~{G\'omez-Ram\'irez}, \emph{{Homological Conjectures, Closure
  Operations, Vector Bundles and Forcing Algebras}}, Ph.D. thesis, Universidad
  Nacional de Colombia with cooperation of the University of Osnabr\"uck, 2013.

\bibitem{eisenbud}
D.~Eisenbud, \emph{{Commutative Algebra with a View Toward Algebraic
  Geometry}}, GTM 150, Springer-Verlag, 1995.

\bibitem{epsteinclosureguide}
N.~Epstein, \emph{A guide of closure operations in commutative algebra},
  Progress in Commutative Algebra 2: Closure, Finiteness and Factorization,
  2012, pp.~1--37.

\bibitem{goertzwedhornalgebraic}
U.~G\"ortz and T.~Wedhorn, \emph{{Algebraic Geometry 1: Schemes with Examples
  and Exercises}}, Vieweg+Teubner, Heidelberg, 2010.

\bibitem{EGAI}
A.~Grothendieck and J.~Dieudonn\'{e}, \emph{{El\'{e}ments de G\'{e}om\'{e}trie
  Alg\'{e}brique {I}}}, Springer, 1971.

\bibitem{hartshornealgebraic}
R.~Hartshorne, \emph{Algebraic {G}eometry}, GTM 52, Springer, New York, 1977.

\bibitem{hochstercanonical}
M.~Hochster, \emph{The direct summand conjecture and canonical elements in
  local cohomology modules}, J. Algebra \textbf{84} (1983), 503--553.

\bibitem{hochstersolid}
\bysame, \emph{Solid closure}, Contemp. Math. \textbf{159} (1994), 103--172.

\bibitem{Hochsterhuneketightclosure}
M.~Hochster and C.~Huneke, \emph{{Tight closure}}, Commutative Algebra, Math.
  Sci. Research Inst. Publ. 15, Springer-Verlag, New York-Berlin-Heidelberg,
  1989, 305-324.

\bibitem{hunekeswanson}
C.~Huneke and I.~Swanson, \emph{{Integral Closure of Ideals, Rings, and
  Modules}}, vol. LMS 336, Cambridge University Press, Cambridge, 2006.

\end{thebibliography}

\providecommand{\bysame}{\leavevmode\hbox to3em{\hrulefill}\thinspace}
\providecommand{\MR}{\relax\ifhmode\unskip\space\fi MR }
\providecommand{\MRhref}[2]{%
  \href{http://www.ams.org/mathscinet-getitem?mr=#1}{#2}
}
\providecommand{\href}[2]{#2}

\end{document}